\documentclass[12pt,reqno,oneside]{amsart}
\usepackage{amsmath, amssymb, amsthm}
\usepackage[utf8]{inputenc}
\usepackage{geometry}
\usepackage{tikz-cd}
\usepackage{tikz}
\usepackage{adjustbox}
\usepackage{enumitem}
\usepackage{hyperref}

\newtheorem{definition}{Definition}[section]
\newtheorem{theorem}[definition]{Theorem}
\newtheorem{proposition}[definition]{Proposition}
\newtheorem{lemma}[definition]{Lemma}
\newtheorem{corollary}[definition]{Corollary}
\newtheorem{example}[definition]{Example}
\newtheorem{remark}[definition]{Remark}

\newcommand{\B}{\mathrm{B}}
\newcommand{\Ba}{\mathcal{B}}

\newcommand{\CC}{\mathcal{C}}

\newcommand{\cat}{\mathrm{cat}}

\newcommand{\Pa}{\mathrm{P}}

\newcommand{\co}{\colon}

\newcommand{\constant}{\mathrm{const}}

\newcommand{\D}{\mathcal{D}}

\newcommand{\E}{\mathcal{E}}
\newcommand{\F}{\mathcal{F}}
\newcommand{\I}{\mathbb{I}}

\newcommand{\id}{\mathrm{id}}

\newcommand{\Ima}{\mathrm{Im}}
\newcommand{\inc}{\mathrm{i}}

\newcommand{\op}{\mathrm{op}}

\newcommand{\Sets}{\mathrm{Set}}

\newcommand{\wtilde}{\widetilde}

\newcommand{\A}{\mathcal{A}}

\newcommand{\Auto}{\mathrm{Aut}}

\title[Categorical Fiber Bundles]
{Reducing and Classifying Fiber Bundles over Small Categories}
\author[I. Carcac\'ia]{%
	Isaac Carcac\'ia-Campos
}
 \address{%
           	Isaac Carcac\'ia-Campos
            \\
              CITMAga, Departamento de Matem\'aticas, Universidade de Santiago de Compostela, 15782-SPAIN}
\email{isaaccarcacia@gmail.com}

\begin{document}
\begin{abstract}
A fiber bundle over a small category is a locally constant family of categories whose transition functors encode how a fixed fiber is transported over the base. Its Grothendieck construction assembles this data into a category over the base, while its global behavior is governed by monodromy.

For finite acyclic categories, relative beat objects provide reductions of the total category over the fixed base, leading to relative cores that exist and are unique up to isomorphism. Monodromy classifies categorical fiber bundles by non-abelian cohomology and describes their strict gauge groups and sections. We also prove a fundamental-groupoid version of Quillen's Theorem~A, which gives reductions of the base that are more general than beat reductions and preserve the classification of bundles with fixed fiber. The results are illustrated by explicit finite examples.
\end{abstract}

\keywords{Small category, categorical fiber bundle, monodromy, beat object, minimal core, fundamental groupoid}
\subjclass[2020]{Primary 55R10; Secondary 54B30,18D30,18E35}
\maketitle

\section{Introduction}

Small categories provide discrete and combinatorial models in which
homotopical constructions can be formulated directly in terms of objects,
morphisms, and universal properties. Their relevance is not limited to the
homotopy types of their classifying spaces: their internal structure supports
explicit notions of homotopy, reduction, minimality, and directed transport.
Finite acyclic categories are particularly suitable for this approach, since
their homotopies and reductions can be constructed from finite categorical
data
\cite{Kozlov,LSTan,StrongHomTan,SimpleHom_Tan,HomotopicDistance}.
This viewpoint belongs to the tradition of using small categories as
combinatorial models for homotopy theory, exemplified by Thomason's model
structure on \(\cat\) \cite{Thomason1980}. Through the nerve and classifying-space constructions, these categorical
models could retain the topological information of familiar spaces.

In this paper, we develop a combinatorial theory of fiber bundles over small
categories. A fiber bundle over a connected category \(\Ba\) with fiber
\(\F\) is a functor \(P\co\Ba\to\cat\) such that \(P(b)\cong\F\) for
every \(b\in\Ba\) and every transition functor is an isomorphism. Its
Grothendieck construction defines a projection
\(\pi_P\co\int^{\Ba}P\to\Ba\), which is a bifibration and satisfies the
strong homotopy lifting property
\cite{DAZIN,GRAY,FibrationFoscoRiehl,Varadarajan,
carcaciacampos2026weakstrongfibrationsfunctors}.

A categorical fiber bundle may be regarded as a locally constant family of
categories over a small base: its fibers carry the same categorical
structure, while the morphisms of the base encode how that structure is
transported. The Grothendieck construction assembles this local data into a
single category equipped with a projection to the base.

Our first contribution is a relative reduction theory for functors between
finite acyclic categories. Following the theories of Stong, Tanaka, and
Cianci--Ottina
\cite{Stong,MR3024764,StrongHomTan,SimpleHom_Tan,
cianci2019fibrationsfinitetopologicalspaces,
cianci2020fiberbundlesalexandroffspaces},
we define beat objects of a functor as beat objects of its total category
whose universal morphisms are vertical. Their removal gives strong
deformation retractions over the base. Iterating this procedure yields
relative cores, which are unique up to isomorphism over the base.

The global structure of a categorical fiber bundle is described by its
monodromy. After identifying the fibers and normalizing along a spanning
tree, the bundle is determined by a representation
\(\rho_P\co\pi_1(\Ba,b_0)\to\Auto(\F)\). This yields a classification by
the non-abelian cohomology set \(H^1(\Ba;\Auto(\F))\), identifies the strict
gauge group with the centralizer of the monodromy subgroup, and characterizes
horizontal and arbitrary sections through strict and lax fixed points,
respectively.

Finally, we introduce a reduction of the base designed to preserve the classification of fiber bundles rather than its full strong homotopy type. We prove a fundamental-groupoid version of Quillen's Theorem~A and use it to define upper and lower fundamental-groupoid negligible objects through punctured comma categories. Their removal preserves the fundamental groupoid of the base, and beat objects are particular cases of this more general notion. Consequently, these reductions leave unchanged the classification of fiber bundles with a fixed fiber.

All the constructions are effective for finite categories presented by
generators and relations. Fundamental groups and monodromy can be computed from spanning trees,
relative cores by successive beat-object removals, and
fundamental-groupoid negligibility from punctured comma categories.

The paper is organized as follows.
Sections~\ref{sec:prelim}--\ref{sec:homotopy} contain the categorical and
homotopical preliminaries. Section~\ref{sec:beat} develops relative cores and proves that
bifibrations with minimal finite acyclic fibers are fiber bundles.
Section~\ref{sec:Monodromy_categorical_fiber_bundles} studies monodromy,
classification, strict gauge transformations, and sections.
Section~\ref{sec:classification-reductions} introduces reductions preserving
classification data. Finally, Section~\ref{sec:examples} presents explicit
examples.

\section{Preliminaries on small categories}\label{sec:prelim}

All categories considered in this paper are small, and we denote by \(\cat\)
the category of small categories and functors. For functors
\(F,G\co\CC\to\D\), we write \(\operatorname{Nat}(F,G)\) for the set of
natural transformations from \(F\) to \(G\). Sets are identified with discrete
categories and thus regarded as objects of \(\cat\).

We shall frequently describe categories by generators and relations. More
precisely, a quiver generates a free category by adjoining identities and
composites, and a category is obtained by imposing relations between parallel
paths.

\begin{example}[The category \(S\)]\label{ex:S}
Let \(S\) be the category generated by two parallel arrows with no relations:
\[
\begin{tikzcd}
x \ar[r, "f", bend left] \ar[r, "g"', bend right] & y.
\end{tikzcd}
\]
\end{example}

\begin{example}[The projective plane category \(\mathcal{P}\)]\label{ex:P}
Let \(\mathcal{P}\) be the category generated by
\[
\begin{tikzcd}
X
  \arrow[r, "f_1", bend left=49]
  \arrow[r, "f_2"', bend right=49]
&
Y
  \arrow[r, "g_1", bend left=49]
  \arrow[r, "g_2"', bend right=49]
&
Z
\end{tikzcd}
\]
subject to the relations
\[
g_1\circ f_1=g_2\circ f_2,
\qquad
g_2\circ f_1=g_1\circ f_2.
\]
\end{example}

\begin{example}[Posets]\label{ex:Sprime}
Every poset \((X,\leq)\) determines a category with one morphism
\(x\to y\) whenever \(x\leq y\). Its Hasse diagram gives a presentation in
which every pair of parallel paths is identified. For example, let \(S'\) be
the poset with objects \(x_1,x_2,y_1,y_2\) and relations
\(x_i\leq y_j\) for all \(i,j\). It is represented by
\[
\begin{tikzcd}
x_1 \arrow[d] \arrow[rd] & x_2 \arrow[ld] \arrow[d] \\
y_1                      & y_2.
\end{tikzcd}
\]
\end{example}

For a category \(\CC\), let \(\Auto(\CC)\) denote the group of invertible
functors \(\CC\to\CC\). For instance,
\(\Auto(S)\cong\mathbb{Z}_2\), where the nontrivial automorphism exchanges
\(f\) and \(g\) and fixes the objects.

\subsection{Path connectivity}

For \(m\geq 0\), let \(\I_m\) be the free category on the alternating graph
\[
0\longrightarrow 1\longleftarrow 2\longrightarrow\cdots
\longrightarrow(\longleftarrow)m.
\]
A \emph{path} in \(\CC\) is a functor \(\I_m\to\CC\). Two objects are
\emph{connected} if they are joined by a path. A category is
\emph{connected} if every pair of its objects is connected, and its connected
components are its maximal connected full subcategories.

\section{Fiber bundles over small categories}\label{sec:fb}

We regard fiber bundles over categories as locally constant
\(\cat\)-valued functors and associate to them categories over the base by
means of the Grothendieck construction.

\begin{definition}\label{def:fiber_bundle}
Let \(\Ba\) be a connected category and let \(\F\) be a category. A
\emph{fiber bundle} over \(\Ba\) with fiber \(\F\) is a functor
\(
P\co\Ba\longrightarrow\cat
\)
such that \(P(b)\cong\F\) for every \(b\in\Ba\), and \(P(f)\) is an
isomorphism for every morphism \(f\) of \(\Ba\).
\end{definition}

We use isomorphisms rather than equivalences because the applications below
concern acyclic categories, for which every equivalence is an isomorphism.
The connectedness assumption is inessential: over an arbitrary base, the
definition can be applied separately to each connected component.

\begin{definition}[Grothendieck construction]
For a functor \(P\co\Ba\to\cat\), its Grothendieck construction
\(\int^{\Ba}P\) is the category whose objects are pairs \((b,x)\), with
\(b\in\Ba\) and \(x\in P(b)\), and whose morphisms
\(
(f,\alpha)\co(b,x)\longrightarrow(b',y)
\)
consist of a morphism \(f\co b\to b'\) in \(\Ba\) and a morphism
\(\alpha\co P(f)(x)\to y\) in \(P(b')\). Composition is given by
\[
(g,\beta)\circ(f,\alpha)
 =
\bigl(g\circ f,\beta\circ P(g)(\alpha)\bigr).
\]
The associated projection
\(
\pi_P\co\int^{\Ba}P\longrightarrow\Ba
\)
is defined by \(\pi_P(b,x)=b\) and \(\pi_P(f,\alpha)=f\).
\end{definition}

\begin{example}[Trivial bundles]\label{ex:product}
If \(P=\constant_{\F}\), then
\(
\int^{\Ba}P\cong\Ba\times\F,
\)
and \(\pi_P\) is the first projection.
\end{example}

\begin{example}[Semidirect products]\label{ex:semidirect}
Let \(G\) and \(H\) be groups, regarded as one-object categories, and let
\(P\co G\to\cat\) correspond to an action
\(g\mapsto\phi_g\in\Auto(H)\). Then \(\int^G P\) is the semidirect product
\(H\rtimes G\), with multiplication
\[
(h_1,g_1)(h_2,g_2)
 =
\bigl(h_1\phi_{g_1}(h_2),g_1g_2\bigr).
\]
\end{example}

\begin{example}\label{ex:klein bottle}
Let \(S\) be the category from Example~\ref{ex:S}, and let
\(s\co S\to S\) be the automorphism exchanging \(f\) and \(g\). Define
\[
K\co S\longrightarrow\cat,
\qquad
K(x)=K(y)=S,\qquad K(f)=\id_S,\qquad K(g)=s.
\]
The category \(\int^S K\) is generated by
\[
\begin{tikzcd}[column sep=4em, row sep=4em]
{(x,x)}
  \arrow[r, "{(f,1_x)}", bend left]
  \arrow[r, "{(g,1_x)}"', bend right]
  \arrow[d, "{(1_x,g)}", bend left]
  \arrow[d, "{(1_x,f)}"', bend right]
&
{(y,x)}
  \arrow[d, "{(1_y,g)}", bend left]
  \arrow[d, "{(1_y,f)}"', bend right]
\\
{(x,y)}
  \arrow[r, "{(f,1_y)}", bend left]
  \arrow[r, "{(g,1_y)}"', bend right]
&
{(y,y)}.
\end{tikzcd}
\]
subject to the four relations:
\begin{itemize}
  \item $(f,1_y) \circ (1_x,f)=(f,K(f)f)=(f,f)=(1_y,f) \circ(f,1_x)$.
    \item $(f,1_y) \circ (1_x,g)=(f,K(f)g)=(f,g)=(1_y,g) \circ(f,1_x)$.
    \item $(g,1_y) \circ (1_x,f)=(g,K(g)f)=(g,g)=(1_y,g) \circ(g,1_x)$.
    \item $(g,1_y) \circ (1_x,g)=(g,K(g)g)=(g,f)=(1_y,f) \circ(g,1_x)$.
    \end{itemize}
\end{example}

\begin{example}\label{Ex:Circle_Twist_S_3}
Let \(\Ba\) be the category generated by three parallel arrows
\[
\begin{tikzcd}
x
  \arrow[r, "f_1" description, bend left=49]
  \arrow[r, "f_2" description]
  \arrow[r, "f_3" description, bend right=49]
&
y
\end{tikzcd}
\]
with no relations. Define \(P\co\Ba\to\cat\) by
\[
P(x)=P(y)=[3],
\qquad
P(f_1)=\id_{[3]},
\qquad
P(f_2)=(12),
\qquad
P(f_3)=(23),
\]
where \([3]=\{1,2,3\}\) is regarded as a discrete category. The total
category is generated by
\[
\begin{tikzcd}[column sep=8em, row sep=2.2em]
{(x,1)}
  \arrow[r, "{(f_1,1_1)}", bend left]
  \arrow[r, "{(f_3,1_1)}"', bend right]
  \arrow[rd, "{(f_2,1_2)}", bend left]
&
{(y,1)}
\\
{(x,2)}
  \arrow[r, "{(f_1,1_2)}"]
  \arrow[ru, "{(f_2,1_1)}", bend left, shift right=2]
  \arrow[rd, "{(f_3,1_3)}"', bend right, shift right]
&
{(y,2)}
\\
{(x,3)}
  \arrow[ru, "{(f_3,1_2)}"', bend right, shift right]
  \arrow[r, "{(f_2,1_3)}", bend right]
  \arrow[r, "{(f_1,1_3)}"', bend left]
&
{(y,3)}.
\end{tikzcd}
\]
Since the fibers are discrete, every morphism in \(\int^{\Ba}P\) has an
identity as its second component.
\end{example}

\section{Bifibrations and coverings}\label{sec:bifib}

We recall the categorical lifting properties used throughout the paper. We
begin with opfibrations, since the covariant Grothendieck construction
naturally produces an opfibration. Our conventions follow
\cite{DAZIN,GRAY,FibrationFoscoRiehl}.

Let \(p\co\E\to\Ba\) be a functor. For \(b\in\Ba\), the \emph{fiber}
\(\E_b\) is the subcategory whose objects are mapped to \(b\) and whose
morphisms are mapped to \(1_b\). Such morphisms are called
\emph{vertical}.

\subsection{Opfibrations and fibrations}

\begin{definition}
A morphism \(\varphi\co e\to e'\) of \(\E\) is
\emph{opcartesian} if, for every morphism \(\beta\co e\to z\) and every
morphism \(\overline\alpha\co p(e')\to p(z)\) satisfying
\(\overline\alpha\circ p(\varphi)=p(\beta)\), there exists a unique
morphism \(\alpha\co e'\to z\) such that
\(\alpha\circ\varphi=\beta\) and \(p(\alpha)=\overline\alpha\).

Equivalently, every compatible diagram
\[
\begin{tikzcd}[column sep=4em, row sep=2.5em]
e
  \arrow[r, "\varphi"]
  \arrow[rd, "\beta"']
&
e'
  \arrow[d, dashed, "\alpha"]
\\
&
z
\end{tikzcd}
\qquad
\begin{tikzcd}[column sep=4em, row sep=2.5em]
p(e)
  \arrow[r, "p(\varphi)"]
  \arrow[rd, "p(\beta)"']
&
p(e')
  \arrow[d, "\overline\alpha"]
\\
&
p(z)
\end{tikzcd}
\]
admits a unique dashed lift.
\end{definition}

\begin{definition}
The functor \(p\co\E\to\Ba\) is a \emph{Grothendieck opfibration} if,
for every morphism \(f\co b\to b'\) of \(\Ba\) and every
\(e\in\E_b\), there is an opcartesian morphism
\(\varphi\co e\to e'\) such that \(p(\varphi)=f\). Such a morphism is
called an \emph{opcartesian lift} of \(f\) from \(e\).
\end{definition}

The codomain of an opcartesian lift is unique up to a unique vertical
isomorphism. Consequently, an opfibration determines transport between its
fibers, but not a preferred transport functor until lifts have been chosen.

\begin{definition}
An \emph{opcleavage} of an opfibration \(p\co\E\to\Ba\) is a choice of an
opcartesian lift
\(\varphi_{f,e}\co e\to f_!e\) for every \(f\co b\to b'\) and every
\(e\in\E_b\). These choices induce transport functors
\(f_!\co\E_b\to\E_{b'}\).

An opcleavage is \emph{closed} if the chosen lifts of identity morphisms are
identities and the chosen lift of a composite is the composite of the chosen
lifts. An opfibration equipped with a closed opcleavage is called
\emph{split}.
\end{definition}

Assuming choice, every opfibration admits an opcleavage. Its transports
define a pseudofunctor \(\Ba\to\cat\), with coherent isomorphisms
\((gf)_!\cong g_!f_!\) and \((1_b)_!\cong\id_{\E_b}\). A closed
opcleavage gives a strict functor, and the Grothendieck construction of a
strict functor has a canonical closed opcleavage.

\begin{proposition}\label{prop:functor_to_Opfibration}
For every functor \(P\co\Ba\to\cat\), the projection
\(\pi_P\co\int^{\Ba}P\to\Ba\) is a split Grothendieck opfibration.
\end{proposition}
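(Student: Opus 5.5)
The plan is to exhibit an explicit closed opcleavage and verify opcartesianness directly from the composition law of \(\int^{\Ba}P\). For a morphism \(f\co b\to b'\) of \(\Ba\) and an object \((b,x)\) of the fiber over \(b\), the chosen lift will be
\[
\varphi_{f,(b,x)}=(f,1_{P(f)(x)})\co(b,x)\longrightarrow\bigl(b',P(f)(x)\bigr).
\]
By construction \(\pi_P(\varphi_{f,(b,x)})=f\). The resulting transport functor \(f_!\) is exactly \(P(f)\) on the fiber, which we identify with \(P(b)\) via \((b,x)\mapsto x\).

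The first step is to prove that \(\varphi=\varphi_{f,(b,x)}\) is opcartesian. Take a morphism \(\beta=(g,\gamma)\co(b,x)\to(c,z)\), so \(\gamma\co P(g)(x)\to z\), and a morphism \(\overline\alpha\co b'\to c\) with \(\overline\alpha\circ f=g\). We look for \(\alpha=(\overline\alpha,\delta)\) with \(\delta\co P(\overline\alpha)(P(f)(x))\to z\). The composition formula gives
\[
\alpha\circ\varphi=\bigl(\overline\alpha f,\ \delta\circ P(\overline\alpha)(1_{P(f)x})\bigr)=(g,\delta),
\]
using functoriality \(P(\overline\alpha)P(f)=P(g)\) and the fact that \(P(\overline\alpha)\) preserves identities. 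Hence \(\alpha\circ\varphi=\beta\) holds if and only if \(\delta=\gamma\). This gives existence and uniqueness simultaneously, since the condition \(p(\alpha)=\overline\alpha\) already fixes the first component.

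The second step is closedness. For identities, \(\varphi_{1_b,(b,x)}=(1_b,1_x)\), which is the identity of \((b,x)\) because \(P(1_b)=\id\). For composites \(g\circ f\), compute
\[
(g,1)\circ(f,1)=\bigl(gf,\ 1\circ P(g)(1_{P(f)x})\bigr)=(gf,1_{P(gf)x}).
\]
This equals the chosen lift of \(gf\) at \((b,x)\), and its codomain matches because \(P(g)P(f)=P(gf)\) strictly.

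No step is a real obstacle, since everything follows from strict functoriality of \(P\). The only care needed is to keep track of where each second component lives (in \(P(b')\) versus \(P(c)\)), and to note that strictness of \(P\) makes the equalities of codomain objects literal equalities rather than isomorphisms. This is what makes the opcleavage closed, so the opfibration is split.
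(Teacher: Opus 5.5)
Your proposal is correct and follows the paper's proof. You use the same canonical lifts \((f,1_{P(f)(x)})\), the same unique factorization \((g,\gamma)=(\overline\alpha,\gamma)\circ(f,1_{P(f)(x)})\) for opcartesianness, and closedness deduced from strict functoriality of \(P\); you just write out explicitly the composition computations that the paper leaves implicit.
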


\begin{proof}
For \(f\co b\to b'\) and \(x\in P(b)\), define
\[
\varphi_{f,x}
=
(f,1_{P(f)(x)})
\co
(b,x)\longrightarrow(b',P(f)(x)).
\]
Given \((g,\beta)\co(b,x)\to(c,z)\) with \(g=h\circ f\), there is a
unique factorization
\[
(g,\beta)
=
(h,\beta)\circ(f,1_{P(f)(x)}).
\]
Hence \(\varphi_{f,x}\) is opcartesian. These lifts preserve identities and
composition because \(P\) is a functor, so they form a closed opcleavage.
\end{proof}

The dual notions are obtained by reversing the arrows.

\begin{definition}
A morphism \(\varphi\co e'\to e\) is \emph{cartesian} if, for every
\(\beta\co z\to e\) and every
\(\overline\alpha\co p(z)\to p(e')\) satisfying
\(p(\varphi)\circ\overline\alpha=p(\beta)\), there exists a unique
\(\alpha\co z\to e'\) such that
\(\varphi\circ\alpha=\beta\) and
\(p(\alpha)=\overline\alpha\).

A functor is a \emph{Grothendieck fibration} if every morphism of the base
admits a cartesian lift with any prescribed codomain. A \emph{cleavage} is
a choice of such lifts. A \emph{bifibration} is both a Grothendieck
fibration and a Grothendieck opfibration.
\end{definition}

A cleavage of a fibration induces contravariant transport functors
\(f^*\co\E_{b'}\to\E_b\). As above, an arbitrary cleavage produces a
pseudofunctor \(\Ba^{\op}\to\cat\), while a closed cleavage produces a
strict functor.

\subsection{Bifibrations arising from adjunctions}

For an opfibration, the existence of compatible right adjoints to the
covariant transport functors provides cartesian lifts in the opposite
direction.

\begin{theorem}\label{prop:Bifibration_As_functor_adjoint}
Let \(P\co\Ba\to\cat\) be a functor. Suppose that, for every morphism
\(f\co b\to b'\), the functor \(P(f)\co P(b)\to P(b')\) admits a right
adjoint \(f^*\co P(b')\to P(b)\). Then
\(\pi_P\co\int^{\Ba}P\to\Ba\) is a bifibration.
\end{theorem}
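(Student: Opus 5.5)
By Proposition~\ref{prop:functor_to_Opfibration}, \(\pi_P\) is already a split opfibration. So it suffices to show that \(\pi_P\) is a Grothendieck fibration, i.e.\ to produce a cartesian lift of every \(f\co b\to b'\) with every prescribed codomain \((b',y)\).

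Fix such an adjunction \(P(f)\dashv f^*\), with counit \(\varepsilon\co P(f)f^*\Rightarrow\id_{P(b')}\) and unit \(\eta\). The candidate lift is
\[
\psi_{f,y}=(f,\varepsilon_y)\co(b,f^*y)\longrightarrow(b',y).
\]
This is a morphism of \(\int^{\Ba}P\), since its second component must be a map \(P(f)(f^*y)\to y\) in \(P(b')\), and \(\varepsilon_y\) is exactly such a map.

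To check that \(\psi_{f,y}\) is cartesian, take \((g,\beta)\co(c,z)\to(b',y)\) and \(h\co c\to b\) with \(f\circ h=g\), so \(\beta\co P(g)(z)\to y\). A lift over \(h\) has the form \((h,\alpha)\) with \(\alpha\co P(h)(z)\to f^*y\) in \(P(b)\). Its composite is
\[
(f,\varepsilon_y)\circ(h,\alpha)=\bigl(f h,\ \varepsilon_y\circ P(f)(\alpha)\bigr),
\]
using the composition formula. We need \(\varepsilon_y\circ P(f)(\alpha)=\beta\), where \(\beta\co P(f)(P(h)z)\to y\) because \(P(g)=P(f)P(h)\) by functoriality. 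The universal property of the counit says that maps \(P(f)(w)\to y\) correspond bijectively to maps \(w\to f^*y\) via \(\alpha\mapsto\varepsilon_y\circ P(f)(\alpha)\). Applied with \(w=P(h)(z)\), this gives existence and uniqueness of \(\alpha\), namely the adjoint transpose \(\alpha=f^*(\beta)\circ\eta_{P(h)z}\).

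The only point needing care is the bookkeeping: identifying \(P(g)(z)\) with \(P(f)(P(h)(z))\) uses strict functoriality of \(P\), and the composition in the Grothendieck construction must produce exactly the transpose formula. Neither step is a real obstacle. The chosen adjoints need not be compatible across composites, since only the existence of cartesian lifts is claimed and not a split cleavage. Hence \(\pi_P\) is both a fibration and an opfibration, that is, a bifibration.
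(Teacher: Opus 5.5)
Your proposal is correct and matches the paper's proof: both take the counit component \((f,\varepsilon_y)\co(b,f^*y)\to(b',y)\) as the cartesian lift. Both then get the unique factorization over \(h\) from the adjunction bijection, applied to \(\beta\co P(f)(P(h)z)\to y\).
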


\begin{proof}
By Proposition~\ref{prop:functor_to_Opfibration}, \(\pi_P\) is an
opfibration. Let \(f\co b\to b'\), let \(x\in P(b')\), and let
\(\epsilon^f_x\co P(f)(f^*x)\to x\) be the counit of
\(P(f)\dashv f^*\). The morphism
\[
(f,\epsilon^f_x)
\co
(b,f^*x)\longrightarrow(b',x)
\]
is cartesian.

Indeed, let \((g,\beta)\co(c,z)\to(b',x)\) and suppose that
\(g=f\circ h\). By functoriality,
\(\beta\co P(f)(P(h)(z))\to x\). The adjunction gives a unique morphism
\(\widetilde\beta\co P(h)(z)\to f^*x\) whose adjunct is \(\beta\).
Therefore
\((h,\widetilde\beta)\co(c,z)\to(b,f^*x)\) is the unique morphism over
\(h\) such that
\[
(f,\epsilon^f_x)\circ(h,\widetilde\beta)=(g,\beta).
\]
Thus \((f,\epsilon^f_x)\) is a cartesian lift of \(f\) with codomain
\((b',x)\).
\end{proof}

The right adjoints need not form a strict functor
\(\Ba^{\op}\to\cat\); their canonical coherence isomorphisms instead define
the associated pseudofunctor.

Since every isomorphism is adjoint to its inverse, we obtain the following
immediately.

\begin{corollary}\label{cor:bundle-bifibration}
If \(P\co\Ba\to\cat\) is a fiber bundle, then
\(\pi_P\co\int^{\Ba}P\to\Ba\) is a bifibration.
\end{corollary}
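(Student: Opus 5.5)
The corollary follows directly from Theorem~\ref{prop:Bifibration_As_functor_adjoint}. The only hypothesis to verify is that, for every morphism \(f\co b\to b'\) of \(\Ba\), the transition functor \(P(f)\co P(b)\to P(b')\) admits a right adjoint. By Definition~\ref{def:fiber_bundle}, each \(P(f)\) is an isomorphism of categories. I therefore take \(f^*:=P(f)^{-1}\co P(b')\to P(b)\) as the candidate right adjoint.

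The key step is to check that an isomorphism of categories \(F\co\CC\to\D\) is left adjoint to its inverse \(F^{-1}\). I will use identity unit and counit:
\[
\eta=1\co\id_{\CC}\Rightarrow F^{-1}F,
\qquad
\epsilon=1\co FF^{-1}\Rightarrow\id_{\D}.
\]
Both composites are identities, so the triangle identities hold trivially. Equivalently, the hom-set bijection \(\D(Fc,d)\cong\CC(c,F^{-1}d)\) is given by \(\beta\mapsto F^{-1}(\beta)\). It is natural because \(F^{-1}\) is a functor, and bijective because \(F^{-1}\) is bijective on morphisms.

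Applying Theorem~\ref{prop:Bifibration_As_functor_adjoint} then shows that \(\pi_P\) is a bifibration. For concreteness, the cartesian lift of \(f\) at \((b',x)\) produced in that proof is \((f,1_x)\co(b,P(f)^{-1}x)\to(b',x)\), since the counit is the identity.

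No step is a real obstacle. The only point to watch is that adjointness is a property of each individual functor \(P(f)\), so no coherence between the inverses is needed for different morphisms. In any case, here the inverses do assemble into a strict functor \(\Ba^{\op}\to\cat\), because \(P\) is strictly functorial.
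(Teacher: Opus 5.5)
Your proposal is correct and follows exactly the paper's route: the paper derives this corollary immediately from Theorem~\ref{prop:Bifibration_As_functor_adjoint}, on the grounds that every isomorphism of categories is adjoint to its inverse. Your explicit verification of that adjunction (identity unit and counit) and the resulting cartesian lift \((f,1_x)\co(b,P(f)^{-1}x)\to(b',x)\) simply spell out what the paper leaves implicit.
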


For a fiber bundle, the canonical opcartesian transport along
\(f\co b\to b'\) is \(P(f)\), and the cartesian transport is
\(P(f)^{-1}\). Thus fiber bundles provide particularly rigid examples of
bifibrations.

Conversely, a bifibration whose fibers are isomorphic to a fixed category
\(\F\) and whose transport functors are isomorphisms is equivalent over the
base to the Grothendieck construction of a morphism-inverting pseudofunctor.
When the chosen cleavages are closed, this pseudofunctor is strict and gives
a fiber bundle in the sense of Definition~\ref{def:fiber_bundle}. In
general, one obtains a fiber bundle only up to the standard
pseudofunctorial description; see \cite{FibrationFoscoRiehl}.

\subsection{Coverings}

Coverings are the discrete instances of the preceding constructions.

\begin{definition}
A functor \(p\co\E\to\Ba\) is a \emph{covering} if every morphism
\(f\co b\to p(e)\) admits a unique lift with codomain \(e\), and every
morphism \(g\co p(e)\to b\) admits a unique lift with domain \(e\).
Equivalently, a covering is a discrete bifibration.
\end{definition}

For a covering, the lifts are unique strictly, rather than merely up to a
unique vertical isomorphism. Consequently, no choice of cleavage is required.

\begin{proposition}\label{prop:coverings-bundles}
Let \(p\co\E\to\Ba\) be a covering. Then every fiber \(\E_b\) is
discrete, and the lifting operations define a functor
\[
P_p\co\Ba\longrightarrow\Sets,
\qquad
b\longmapsto\operatorname{Ob}(\E_b),
\]
which sends every morphism to a bijection. Conversely, if
\(Q\co\Ba\to\Sets\) sends every morphism to a bijection, then
\(\int^{\Ba}Q\to\Ba\) is a covering.
\end{proposition}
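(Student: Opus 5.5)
The plan is to prove the two directions separately. In each, I only unwind the definitions of covering, opcartesian lift and the Grothendieck construction.

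\emph{Discreteness of fibers.} Let \(\nu\co e\to e'\) be a vertical morphism in \(\E_b\), so \(p(\nu)=1_b\). Both \(\nu\) and \(1_e\) are lifts of \(1_b\) with domain \(e\). Uniqueness of such lifts forces \(\nu=1_e\), and in particular \(e'=e\). Hence \(\E_b\) has only identity morphisms, i.e.\ it is discrete.

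\emph{The functor \(P_p\).} For \(f\co b\to b'\) and \(e\in\E_b\), let \(\bar f_e\co e\to f_!e\) be the unique lift of \(f\) with domain \(e\), and set \(P_p(f)(e)=f_!e\).
\begin{itemize}
\item Identities: \(1_e\) is the unique lift of \(1_b\) with domain \(e\), so \(P_p(1_b)=\id\).
\item Composition: for \(g\co b'\to b''\), the composite \(\bar g_{f_!e}\circ\bar f_e\) is a lift of \(gf\) with domain \(e\). By uniqueness it equals \(\overline{(gf)}_e\), so \((gf)_!=g_!f_!\).
\item Bijectivity: given \(e'\in\E_{b'}\), the unique lift of \(f\) with codomain \(e'\) has some domain \(e\). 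This lift is then also the unique lift with domain \(e\), so \(f_!e=e'\); this gives surjectivity. If \(f_!e_1=f_!e_2=e'\), then \(\bar f_{e_1}\) and \(\bar f_{e_2}\) are both lifts of \(f\) with codomain \(e'\). Uniqueness gives \(e_1=e_2\); this gives injectivity.
\end{itemize}

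\emph{The converse.} Let \(Q\co\Ba\to\Sets\) send every morphism to a bijection. Since each \(Q(b)\) is a discrete category, a morphism \((b,x)\to(b',y)\) in \(\int^{\Ba}Q\) is a pair \((f,1_y)\) with \(Q(f)(x)=y\). Such a morphism is therefore determined by \(f\) together with the condition \(Q(f)(x)=y\).
\begin{itemize}
\item Lifts with prescribed domain: given \(f\co b\to b'\) and \((b,x)\), the morphism \((f,1)\co(b,x)\to(b',Q(f)(x))\) is a lift of \(f\). It is the only one, because the codomain object is forced to be \(Q(f)(x)\).
\item Lifts with prescribed codomain: given \(f\co b\to b'\) and \((b',y)\), a lift is \((f,1)\co(b,x)\to(b',y)\) with \(Q(f)(x)=y\). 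Bijectivity of \(Q(f)\) gives exactly one such \(x\), namely \(x=Q(f)^{-1}(y)\).
\end{itemize}
Hence \(\int^{\Ba}Q\to\Ba\) is a covering.

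No step is a real obstacle. The only point needing care is surjectivity in the forward direction: one must notice that a lift with prescribed codomain is automatically the unique lift from its own domain.
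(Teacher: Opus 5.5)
Your proof is correct and follows the same route as the paper. Discreteness comes from uniqueness of lifts of identities, functoriality and bijectivity of \(f_!\) come from uniqueness of lifts with prescribed domain or codomain, and the converse holds because morphisms of \(\int^{\Ba}Q\) have identity second component. Your surjectivity and injectivity argument simply spells out the step the paper compresses into ``lifting with prescribed codomains gives its inverse.''
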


\begin{proof}
Uniqueness of lifts of identity morphisms implies that every vertical
morphism is an identity, so the fibers are discrete. For
\(f\co b\to b'\), lifting from prescribed domains gives a map
\(f_!\co\operatorname{Ob}(\E_b)\to\operatorname{Ob}(\E_{b'})\), while
lifting with prescribed codomains gives its inverse. Uniqueness also gives
\((gf)_!=g_!f_!\) and \((1_b)_!=\id\).

Conversely, if every \(Q(f)\) is a bijection, then
\((f,1_{Q(f)(x)})\) is the unique lift with domain \((b,x)\), and
\(Q(f)^{-1}\) gives the unique lift with any prescribed codomain.
\end{proof}

Thus coverings are precisely the fiber bundles with discrete fibers.

\begin{example}\label{ex:double_cover}
Let \(S\) be the category from Example~\ref{ex:S}, and define
\[
Q\co S\longrightarrow\Sets,
\qquad
Q(x)=Q(y)=\{1,2\},
\qquad
Q(f)=\id,
\qquad
Q(g)=\sigma,
\]
where \(\sigma\) exchanges \(1\) and \(2\). Then \(\int^S Q\) is
represented by
\[
\begin{tikzcd}[column sep=4em, row sep=3em]
{(x,1)}
  \arrow[r, "{(f,1_1)}"]
  \arrow[rd, "{(g,1_2)}", bend left]
&
{(y,1)}
\\
{(x,2)}
  \arrow[r, "{(f,1_2)}"']
  \arrow[ru, "{(g,1_1)}"', bend right]
&
{(y,2)}.
\end{tikzcd}
\]
This is a two-sheeted covering of \(S\), analogous to the nontrivial double
cover of the circle.
\end{example}
\section{Strong homotopy}\label{sec:homotopy}

Strong homotopy between functors can be described either by interval
categories or by zigzags of natural transformations.

\begin{definition}
Two functors \(F,G\co\CC\to\D\) are \emph{strongly homotopic}, written
\(F\simeq G\), if there exist \(m\geq 0\) and a functor
\(
H\co\CC\times\I_m\longrightarrow\D
\)
such that \(H(-,0)=F\) and \(H(-,m)=G\).
\end{definition}

Equivalently, \(F\simeq G\) if there is a finite zigzag
\[
F=F_0\Longleftrightarrow F_1
 \Longleftrightarrow\cdots
 \Longleftrightarrow F_n=G
\]
of natural transformations, where each arrow may point in either direction
\cite{LEE,LEE2}.

Let \(i_0\co\CC\to\CC\times\I_m\) be given by \(i_0(c)=(c,0)\). A functor
\(p\co\E\to\Ba\) has the \emph{right homotopy lifting property} if every
commutative diagram
\[
\begin{tikzcd}
\CC \arrow[d, "i_0"'] \arrow[r, "G"] & \E \arrow[d, "p"] \\
\CC\times\I_m \arrow[r, "H"']        & \Ba
\end{tikzcd}
\]
admits a lift \(\wtilde H\co\CC\times\I_m\to\E\).

\begin{proposition}[{\cite[Proposition 3.1 and Corollary 3.2]{Varadarajan}}]
Every bifibration has the right homotopy lifting property.
\end{proposition}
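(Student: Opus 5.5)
We reduce the lifting problem to a single step of the interval \(\I_m\) and then induct along it. Since \(\I_m\) is the free category on an alternating zigzag, \(\CC\times\I_m\) is the union of the pieces \(\CC\times\{k\to k+1\}\) or \(\CC\times\{k\leftarrow k+1\}\), glued along \(\CC\times\{k\}\). Because \(\I_m\) is free, a functor on \(\CC\times\I_m\) is the same as a family of functors \(H_k\co\CC\to\Ba\), one for each object \(k\), together with natural transformations between consecutive ones in the direction of the arrows. Constructing \(\wtilde H\) therefore amounts to this: starting from \(\wtilde H_0=G\) with \(p\wtilde H_0=H_0\), we produce successively functors \(\wtilde H_k\co\CC\to\E\) and natural transformations lifting the given ones \(\theta_k\co H_k\Rightarrow H_{k+1}\) or \(H_{k+1}\Rightarrow H_k\).

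For a forward arrow we use the opfibration structure. Fix an opcleavage of \(p\). Suppose \(\wtilde H_k\) has been built with \(p\wtilde H_k=H_k\), and let \(\theta\co H_k\Rightarrow H_{k+1}\). On objects we set \(\wtilde H_{k+1}(c)=(\theta_c)_!\wtilde H_k(c)\) and take \(\wtilde\theta_c=\varphi_{\theta_c,\wtilde H_k(c)}\). For a morphism \(u\co c\to c'\) of \(\CC\), the composite \(\wtilde\theta_{c'}\circ\wtilde H_k(u)\) lies over \(H_{k+1}(u)\circ\theta_c\). Opcartesianness of \(\wtilde\theta_c\) then yields a unique \(\wtilde H_{k+1}(u)\) over \(H_{k+1}(u)\) making the naturality square commute. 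Uniqueness in the opcartesian property gives functoriality of \(\wtilde H_{k+1}\): both \(\wtilde H_{k+1}(u'u)\) and \(\wtilde H_{k+1}(u')\wtilde H_{k+1}(u)\) solve the same factorization problem, and similarly for identities. By construction \(p\wtilde H_{k+1}=H_{k+1}\) and \(p\wtilde\theta=\theta\).

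For a backward arrow \(\theta\co H_{k+1}\Rightarrow H_k\) the argument is dual. We fix a cleavage, set \(\wtilde H_{k+1}(c)=\theta_c^{*}\wtilde H_k(c)\) with the cartesian lift \(\wtilde\theta_c\co\wtilde H_{k+1}(c)\to\wtilde H_k(c)\), and define \(\wtilde H_{k+1}\) on morphisms by the cartesian universal property applied to \(\wtilde H_k(u)\circ\wtilde\theta_c\).

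After \(m\) steps, assembling the \(\wtilde H_k\) and \(\wtilde\theta\) via the freeness of \(\I_m\) gives \(\wtilde H\) with \(\wtilde H\circ i_0=G\) and \(p\wtilde H=H\). The main point needing care is the functoriality check in each step, which comes entirely from the uniqueness clauses; everything else is bookkeeping. Only existence of a lift is required, not strictness of the cleavages. This explains why a bifibration, and not merely a fibration or an opfibration, is needed: the zigzag \(\I_m\) contains arrows in both directions.
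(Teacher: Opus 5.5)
Your proof is correct and complete. The paper gives no argument of its own for this statement and only cites Varadarajan, so your induction along the zigzag is a self-contained replacement for that citation. In that induction you curry \(H\) into natural transformations \(\theta_k\), which is legitimate because \(\operatorname{Fun}(\CC\times\I_m,\Ba)\cong\operatorname{Fun}(\I_m,\Ba^{\CC})\) and \(\I_m\) is free on a graph. You then lift forward transformations objectwise by opcartesian lifts and backward ones by cartesian lifts, and get functoriality of \(\wtilde H_{k+1}\) and naturality of \(\wtilde\theta\) from the uniqueness clauses.

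One point worth noting: the one-step definition of \(\wtilde H_{k+1}(u)\) relies on the strong form of (op)cartesianness in the paper's definitions, where factorization is over an arbitrary base morphism \(\overline\alpha\), here \(H_{k+1}(u)\), and not only over identities. You use exactly that form, so this step is sound.
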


Together with Corollary~\ref{cor:bundle-bifibration}, this gives:

\begin{corollary}\label{cor:bundle-hlp}
If \(P\co\Ba\to\cat\) is a fiber bundle, then
\(
\pi_P\co\int^{\Ba}P\longrightarrow\Ba
\)
has the right homotopy lifting property.
\end{corollary}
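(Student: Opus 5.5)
The statement is a direct combination of two earlier results, so I would prove it by chaining them rather than building a lift by hand. First, since \(P\co\Ba\to\cat\) is a fiber bundle, every transition functor \(P(f)\) is an isomorphism. Corollary~\ref{cor:bundle-bifibration} then tells us that \(\pi_P\co\int^{\Ba}P\to\Ba\) is a bifibration. That corollary comes from Theorem~\ref{prop:Bifibration_As_functor_adjoint}, applied with right adjoint \(f^*=P(f)^{-1}\).

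Second, I would invoke the cited result of Varadarajan: every bifibration has the right homotopy lifting property. Applied to \(\pi_P\), this gives the claim immediately. For any \(m\geq 0\), any category \(\CC\), and any commutative square with \(G\co\CC\to\int^{\Ba}P\) and \(H\co\CC\times\I_m\to\Ba\) satisfying \(\pi_P G=H i_0\), we obtain a lift \(\wtilde H\) with \(\wtilde H i_0=G\) and \(\pi_P\wtilde H=H\).

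Should one want a self-contained argument rather than the citation, I would build \(\wtilde H\) by induction on \(k=0,\dots,m\), extending over each successive arrow of the zigzag \(\I_m\). Because \(\I_m\) is free on its generating arrows, a functor out of \(\CC\times\I_m\) amounts to functors \(H_k\co\CC\to\Ba\) together with a natural transformation between consecutive ones, in the direction of the corresponding arrow.

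\begin{itemize}
\item \emph{Forward arrow} \(k\to k+1\). Set \(\wtilde H_{k+1}(c)=(H_{k+1}(c),P(\eta_c)(x_c))\), where \(\wtilde H_k(c)=(H_k(c),x_c)\) and \(\eta\co H_k\Rightarrow H_{k+1}\). The component at \(c\) is the chosen opcartesian lift \(\varphi_{\eta_c,x_c}\). On a morphism \(u\co c\to c'\), define \(\wtilde H_{k+1}(u)\) as the unique factorization through these opcartesian lifts. Naturality and functoriality then follow from the uniqueness in the universal property.
\item \emph{Backward arrow}. Use instead the cartesian lifts \((f,\epsilon^f_x)\), which for a bundle are \((f,1)\) with domain object \(P(f)^{-1}x\).
\end{itemize}

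The only real point of care is this functoriality of \(\wtilde H_{k+1}\) on morphisms of \(\CC\). It is exactly where (op)cartesianness is used, and it is the step I expect to be the main obstacle in a hands-on proof. With the citation, however, no obstacle remains.
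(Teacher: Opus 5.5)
Your proof is correct and is exactly the paper's argument: Corollary~\ref{cor:bundle-bifibration} makes \(\pi_P\) a bifibration, and the cited result of Varadarajan then gives the right homotopy lifting property. Your optional hands-on construction along the zigzag \(\I_m\), using opcartesian lifts for forward arrows and cartesian lifts for backward arrows, is also sound, although the paper does not need it.
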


\section{Relative cores and minimal fiber bundles}
\label{sec:beat}

The homotopy theory of finite \(T_0\)-spaces developed by Stong is based
on the successive removal of beat points and the resulting notion of a core
\cite{Stong,MR3024764}. Cianci and Ottina developed a relative reduction
theory for maps over a fixed finite space and studied its interaction with
fibrations
\cite{cianci2019fibrationsfinitetopologicalspaces}. They also studied fiber
bundles over Alexandroff spaces through a topological Grothendieck
construction
\cite{cianci2020fiberbundlesalexandroffspaces}. Independently, Tanaka
extended strong homotopy reduction from finite posets to finite acyclic
categories and \(\Delta\)-complexes
\cite{StrongHomTan,SimpleHom_Tan}.

In this section, we develop the corresponding relative theory for functors
between finite acyclic categories. Beat objects whose universal morphisms are
vertical can be removed without changing the base, and successive removals
produce a core in the slice category. We then apply Tanaka's rigidity lemma
to prove the uniqueness of relative cores and to show that every bifibration
with minimal finite acyclic fibers is isomorphic over its base to the
projection associated with a fiber bundle.

A category \(\A\) is \emph{acyclic} \cite[Chapter 10]{Kozlov} if
\(\operatorname{Hom}_{\A}(x,x)=\{1_x\}\) for every object \(x\), and
\(\operatorname{Hom}_{\A}(x,y)\neq\varnothing\) implies
\(\operatorname{Hom}_{\A}(y,x)=\varnothing\) whenever \(x\neq y\).
Thus finite acyclic categories generalize finite posets by allowing parallel
morphisms.
\subsection{Beat objects in finite acyclic categories}

For an object \(x\in\A\), let
\(\widehat{\A\downarrow x}\) denote the full subcategory of
\(\A\downarrow x\) obtained by removing \(1_x\). Its objects are the
nonidentity morphisms with codomain \(x\). Dually,
\(\widehat{x\downarrow\A}\) denotes the full subcategory obtained by
removing \(1_x\) from \(x\downarrow\A\).

\begin{definition}[{\cite[Definition 2.3]{StrongHomTan}}]
An object \(x\) of an acyclic category \(\A\) is a \emph{down beat
object} if \(\widehat{\A\downarrow x}\) has a terminal object. Equivalently,
there exists a morphism \(u\co y\to x\) such that every nonidentity
morphism \(v\co z\to x\) factors uniquely as \(v=u\circ\widetilde v\).

Dually, \(x\) is an \emph{up beat object} if
\(\widehat{x\downarrow\A}\) has an initial object. A \emph{beat object}
is either a down beat object or an up beat object.
\end{definition}

When \(\A\) is a poset, this recovers Stong's notion of a beat point
\cite{Stong}. A down beat object \(x\), witnessed by \(u\co y\to x\),
determines a retraction
\(r\co\A\to\A\setminus\{x\}\) and a natural transformation
\(\inc r\Rightarrow\id_{\A}\), where
\(\inc\co\A\setminus\{x\}\to\A\) is the inclusion. The dual
construction for an up beat object gives a natural transformation
\(\id_{\A}\Rightarrow\inc r\).

A finite acyclic category is \emph{minimal} if it has no beat objects. A
\emph{core} of \(\A\) is a minimal full subcategory obtained by
successively removing beat objects.

\begin{lemma}[Rigidity lemma {\cite[Proposition 2.2]{StrongHomTan}}]
\label{lem:rigidity}
Let \(\CC\) be a minimal finite acyclic category. If
\(F\co\CC\to\CC\) is strongly homotopic to \(\id_{\CC}\), then
\(F=\id_{\CC}\).
\end{lemma}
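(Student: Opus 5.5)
The plan is to reduce to the elementary case of a single natural transformation and argue by induction on a linear extension of the acyclic order on objects. By the zigzag description of strong homotopy recalled in Section~\ref{sec:homotopy}, \(F\simeq\id_{\CC}\) means there is a chain
\[
\id_{\CC}=F_0\Longleftrightarrow F_1\Longleftrightarrow\cdots\Longleftrightarrow F_n=F
\]
of natural transformations. Induction on \(n\) therefore reduces the lemma to the following claim: if \(G\co\CC\to\CC\) is joined to \(\id_{\CC}\) by a single natural transformation \(\eta\co G\Rightarrow\id_{\CC}\) or \(\eta\co\id_{\CC}\Rightarrow G\), then \(G=\id_{\CC}\). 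The two directions are dual, and dualizing exchanges down and up beat objects, so it suffices to treat \(\eta\co G\Rightarrow\id_{\CC}\).

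First I show that \(G\) fixes every object. Suppose not, and use finiteness and acyclicity to choose an object \(x\) with \(G(x)\neq x\) that is minimal with this property in the sense that no nonidentity morphism \(z\to x\) has a source \(z\) with \(G(z)\neq z\). (Since \(\CC\) is acyclic, "there is a nonidentity morphism \(z\to x\)" is a strict partial order on objects, so such a minimal element exists.) Let \(u=\eta_x\co G(x)\to x\), which is a nonidentity morphism. Take any nonidentity \(v\co z\to x\). By minimality \(G(z)=z\), and naturality gives
\[
v\circ\eta_z=u\circ G(v).
\]
If I also know \(\eta_z=1_z\), this says \(v=u\circ G(v)\), so every nonidentity morphism into \(x\) factors through \(u\).

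Uniqueness of the factorization should also come from naturality. If \(v=u\circ w\), then \(w\co z\to G(x)\) is a nonidentity morphism into an object different from \(x\). I then want to show \(w=G(v)\). The idea is to apply \(G\) and use \(\eta\) at \(G(x)\), using naturality squares together with acyclicity, which forces \(\eta_{G(x)}\) to be compatible. This would make \(u\) terminal in \(\widehat{\CC\downarrow x}\), so \(x\) would be a down beat object, contradicting minimality of \(\CC\). Hence \(G\) is the identity on objects.

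Second, once \(G\) fixes every object, each \(\eta_x\) is an endomorphism of \(x\), hence equals \(1_x\) by acyclicity. Naturality then reads \(f=G(f)\) for every morphism \(f\), so \(G=\id_{\CC}\).

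The main obstacle is the object step. The minimal-counterexample argument needs \(\eta_z=1_z\) for objects below \(x\), and it also needs the uniqueness half of the factorization. I would handle the first point by instead choosing \(x\) minimal among objects with \(\eta_x\neq 1_x\), which is equivalent to \(G(x)\neq x\) by acyclicity; then \(\eta_z=1_z\) holds below \(x\) automatically. For uniqueness, if the direct naturality argument does not close, I would fall back on a counting argument. Since \(G\) is fixed by \(\eta\)-compatibility and \(\CC\) is finite, I would try to show that the map \(w\mapsto u\circ w\) from morphisms into \(G(x)\) to nonidentity morphisms into \(x\) is bijective, with inverse \(v\mapsto G(v)\). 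The identity \(u\circ G(v)=v\) shows that this inverse is a section; injectivity would follow if the composite \(G(u\circ w)\) can be computed back to \(w\) via \(G(u)=\eta_{G(x)}\)-type identities.
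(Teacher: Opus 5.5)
Your strategy is the paper's. The paper does not reprove Lemma~\ref{lem:rigidity}, since it cites Tanaka, but its proof of Lemma~\ref{lem:relative-rigidity} becomes exactly your argument when the base is terminal. That argument takes a minimal counterexample, shows that $\eta_x$ is terminal in $\widehat{\CC\downarrow x}$, handles the dual case, and inducts along the zigzag. Your reduction to a single natural transformation is correct. So are the existence half of the factorization, your choice of $x$ minimal with $\eta_x\neq 1_x$ (equivalent to $G(x)\neq x$ by acyclicity), and your final step on morphisms.

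The gap is the uniqueness half of the factorization. You state it only as an intention, and your fallback does not work. The identity $u\circ G(v)=v$ shows only that $w\mapsto u\circ w$ is surjective. Injectivity of that map is literally the uniqueness you want, and nothing compares the cardinalities, so the counting argument is circular. Also, $w$ need not be a nonidentity morphism: for $v=u$ one has $w=1_{G(x)}$.

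The missing observation is that $G(x)$ itself lies strictly below $x$, because $u\co G(x)\to x$ is a nonidentity morphism. Minimality of $x$ therefore applies at $G(x)$, giving $\eta_{G(x)}=1_{G(x)}$ and $G(G(x))=G(x)$. With this, the argument closes as follows:
\begin{enumerate}
\item $G(u)$ is an endomorphism of $G(x)$, so $G(u)=1_{G(x)}$ by acyclicity.
\item Naturality of $\eta$ at $w\co z\to G(x)$, using $\eta_z=1_z$ and $\eta_{G(x)}=1_{G(x)}$, gives $G(w)=w$.
\item Applying $G$ to $v=u\circ w$ gives $G(v)=G(u)\circ G(w)=w$.
\end{enumerate}
This proves uniqueness, and it is precisely how the paper completes the argument.
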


In particular, cores of finite acyclic categories are unique up to
isomorphism \cite[Corollary 2.2]{StrongHomTan}.

\begin{example}\label{ex:beat-points}
Let \(\D\) be the category generated by
\begin{center}
\begin{tikzcd}
a \arrow[r, "h"] &
b
  \arrow[r, "f_1" description, bend left=49]
  \arrow[r, "f_2" description]
  \arrow[r, "f_3" description, bend right=49]
&
c
\end{tikzcd}
\end{center}
subject to the relation \(f_1\circ h=f_3\circ h\). The category
\(\widehat{\D\downarrow b}\) has the unique object \(h\co a\to b\), so
\(b\) is a down beat object. Removing \(b\) gives the category generated by
\begin{center}
\begin{tikzcd}[column sep=4em]
a
  \arrow[r, "f_1\circ h=f_3\circ h", bend left=35]
  \arrow[r, "f_2\circ h"', bend right=35]
&
c.
\end{tikzcd}
\end{center}
This category has no beat objects and is therefore a core of \(\D\).
\end{example}

\subsection{Beat objects over a fixed base}
We shall work in the slice category \(\cat/\Ba\), whose objects are
functors to \(\Ba\) and whose morphisms are functors commuting with the
projections. A natural transformation between functors over \(\Ba\) is
said to be \emph{over \(\Ba\)} if all its components are vertical.
Strong homotopies and strong deformation retracts over \(\Ba\) are defined
by requiring all functors and natural transformations involved to lie over
\(\Ba\).
Let \(p\co\E\to\Ba\) be a functor. For \(b\in\Ba\), let \(\E_b\)
denote the fiber of \(p\) over \(b\).

\begin{definition}
An object \(e\in\E\) is a \emph{down beat object of \(p\)} if it is a
down beat object of \(\E\) and its universal morphism
\(u\co d\to e\) is vertical, that is, \(p(u)=1_{p(e)}\).

Dually, \(e\) is an \emph{up beat object of \(p\)} if it is an up beat
object of \(\E\) and its universal morphism is vertical. A
\emph{beat object of \(p\)} is either a down beat object or an up beat
object of \(p\).
\end{definition}

Equivalently, a down beat object of \(p\) is a down beat object of the total
category whose universal morphism also belongs to the fiber
\(\E_{p(e)}\). This is the categorical counterpart of the relative beat
points considered by Cianci and Ottina \cite{cianci2019fibrationsfinitetopologicalspaces}.

\begin{proposition}[Elementary relative reduction]
\label{prop:relative-beat-reduction}
Let \(p\co\E\to\Ba\) be a functor between acyclic categories, and let
\(e\in\E\) be a down beat object of \(p\), witnessed by a vertical
morphism \(u\co d\to e\). If
\(\inc\co\E\setminus\{e\}\to\E\) is the inclusion, then there is a
retraction \(r\co\E\to\E\setminus\{e\}\) over \(\Ba\) and a natural
transformation over \(\Ba\) from \(\inc r\) to \(\id_{\E}\).
Consequently, the restriction of \(p\) to \(\E\setminus\{e\}\) is a
strong deformation retract of \(p\) in \(\cat/\Ba\).

The dual statement holds for up beat objects.
\end{proposition}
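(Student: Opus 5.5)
We construct the retraction explicitly from the universal property of \(u\co d\to e\), and then check that everything lies over \(\Ba\).

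\emph{The retraction.} On objects, set \(r(e)=d\) and \(r(z)=z\) for \(z\neq e\). On a morphism \(v\co z\to z'\) of \(\E\), let:
\begin{itemize}
\item \(r(v)=v\) when neither endpoint is \(e\);
\item \(r(v)=\widetilde v\) when \(v\co z\to e\) with \(z\neq e\), where \(\widetilde v\) is the unique morphism with \(v=u\circ\widetilde v\);
\item \(r(v)=v\circ u\) when \(v\co e\to z'\) with \(z'\neq e\);
\item \(r(1_e)=1_d\).
\end{itemize}
Acyclicity gives \(\operatorname{Hom}(e,e)=\{1_e\}\), so these cases are exhaustive. Acyclicity also gives \(d\neq e\), since \(u\) is a nonidentity morphism into \(e\); hence \(r\) lands in \(\E\setminus\{e\}\), and \(r\) restricts to the identity there.

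\emph{Functoriality.} This is the step needing the most care, though it remains routine. I check \(r(w\circ v)=r(w)\circ r(v)\) by the position of \(e\) in a composable pair \(z\xrightarrow{v}z'\xrightarrow{w}z''\).
\begin{itemize}
\item If \(z''=e\) and \(z,z'\neq e\): uniqueness of factorization gives \(\widetilde{w\circ v}=\widetilde w\circ v\).
\item If \(z=e\) and \(z',z''\neq e\): associativity gives \((w\circ v)\circ u\).
\item If \(z'=e\) and \(z,z''\neq e\): we need \(r(w\circ v)=w\circ v\), while \(r(w)\circ r(v)=w\circ u\circ\widetilde v\). These agree because \(u\circ\widetilde v=v\).
\end{itemize}
Cases where \(e\) appears twice force an identity on \(e\) by acyclicity, so they are trivial. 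Note that \(z=z''=e\) with \(z'\neq e\) is impossible for the same reason.

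\emph{Lying over \(\Ba\).} We verify \(p\circ r=p\). On objects, \(p(d)=p(e)\) because \(u\) is vertical. On morphisms:
\begin{itemize}
\item for \(v\co z\to e\), we have \(p(v)=p(u)\circ p(\widetilde v)=p(\widetilde v)\), since \(p(u)=1_{p(e)}\);
\item for \(v\co e\to z'\), we have \(p(v\circ u)=p(v)\).
\end{itemize}
So \(r\) is a morphism in \(\cat/\Ba\). Since \(p\circ\inc\) is the restriction of \(p\), the inclusion is also over \(\Ba\).

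\emph{The natural transformation.} Define \(\eta\co\inc r\Rightarrow\id_\E\) by \(\eta_e=u\) and \(\eta_z=1_z\) for \(z\neq e\). Every component is vertical, so \(\eta\) is over \(\Ba\). Naturality says \(v\circ\eta_z=\eta_{z'}\circ r(v)\), which holds case by case:
\begin{itemize}
\item for \(v\co z\to e\), it reads \(v=u\circ\widetilde v\);
\item for \(v\co e\to z'\), it reads \(v\circ u=v\circ u\);
\item the remaining cases are trivial.
\end{itemize}

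\emph{Conclusion.} \(r\circ\inc=\id\) and \(\eta\co\inc r\Rightarrow\id_\E\) is a natural transformation over \(\Ba\). A single natural transformation is a strong homotopy with \(m=1\), realized by a functor \(\E\times\I_1\to\E\) that commutes with the projections to \(\Ba\). Hence \(p|_{\E\setminus\{e\}}\) is a strong deformation retract of \(p\) in \(\cat/\Ba\).

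The up beat case is the dual construction, obtained by applying the argument to \(\E^{\op}\to\Ba^{\op}\). This produces \(\id_\E\Rightarrow\inc r\) with the vertical universal morphism \(u\co e\to d\) as its only nontrivial component.
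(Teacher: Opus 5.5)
Your proof is correct and follows the paper's argument essentially verbatim. It uses the same retraction \(r\) (with \(r(e)=d\), \(r(v)=\widetilde v\) for \(v\co z\to e\), and \(r(v)=v\circ u\) for \(v\co e\to z'\)), the same key composition case \(z\to e\to z''\), and the same vertical transformation with \(\eta_e=u\). Your additional remarks make explicit what the paper treats as routine: that \(d\neq e\), the exhaustive case analysis via acyclicity, the realization as a homotopy with \(m=1\) over \(\Ba\), and the dualization through \(\E^{\op}\to\Ba^{\op}\).
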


\begin{proof}
Set \(\E'=\E\setminus\{e\}\). Define \(r(e)=d\) and \(r(x)=x\) for
\(x\neq e\). On morphisms not incident with \(e\), let \(r\) be the
identity, and set \(r(1_e)=1_d\).

If \(v\co x\to e\) is nonidentity, the universal property of
\(u\co d\to e\) gives a unique morphism
\(\widetilde v\co x\to d\) such that \(v=u\circ\widetilde v\). Define
\(r(v)=\widetilde v\). If \(v\co e\to x\), define \(r(v)=v\circ u\).

These assignments preserve composition. The only nontrivial case is a
composite \(x\xrightarrow{v}e\xrightarrow{w}y\). Writing
\(v=u\circ\widetilde v\), we obtain
\(r(wv)=wv=w u\widetilde v=r(w)r(v)\). The remaining cases follow
directly from the definition or from the uniqueness in the universal
property of \(u\). Thus \(r\) is a functor and \(r\inc=\id_{\E'}\).

Since \(p(u)=1_{p(e)}\), we have \(p(d)=p(e)\). Moreover, if
\(v=u\widetilde v\), then \(p(v)=p(\widetilde v)\), while
\(p(vu)=p(v)\). It follows that \(p\inc r=p\), so \(r\) is a morphism
over \(\Ba\).

Define \(\alpha\co\inc r\Rightarrow\id_{\E}\) by
\(\alpha_e=u\) and \(\alpha_x=1_x\) for \(x\neq e\). Naturality for a
morphism with codomain \(e\) is the defining equality
\(u r(v)=v\), and naturality for a morphism with domain \(e\) is the
equality \(r(v)=vu\). Every component of \(\alpha\) is vertical, so
\(\alpha\) is a natural transformation over \(\Ba\).
\end{proof}

\begin{definition}
A functor \(p\co\E\to\Ba\) between finite acyclic categories is
\emph{minimal over \(\Ba\)} if it has no beat objects. A
\emph{relative core} of \(p\) is a restriction
\(p_{\mathrm c}\co\E_{\mathrm c}\to\Ba\) obtained by successively
removing beat objects of the functor and which is minimal over \(\Ba\).
\end{definition}

\begin{theorem}[Existence of relative cores]
\label{thm:relative-core}
Every functor \(p\co\E\to\Ba\) between finite acyclic categories admits
a relative core. Moreover, its relative core is a strong deformation
retract of \(p\) in \(\cat/\Ba\).
\end{theorem}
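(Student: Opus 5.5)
The plan is an induction on the number of objects of \(\E\), iterating Proposition~\ref{prop:relative-beat-reduction} and composing the resulting deformation retractions. If \(p\) has no beat objects, then \(p\) is minimal over \(\Ba\) and is its own relative core; the identity exhibits it as a strong deformation retract. Otherwise, choose a beat object \(e\) of \(p\), say a down beat object witnessed by a vertical morphism \(u\co d\to e\); the up case is dual. Set \(\E'=\E\setminus\{e\}\) and \(p'=p\inc\).

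Two facts are needed to apply the induction hypothesis to \(p'\). First, \(\E'\) is a full subcategory of the finite acyclic category \(\E\), so it is again finite and acyclic; the hom-set conditions are inherited verbatim. Second, \(\E'\) has one object fewer than \(\E\). The induction therefore applies to \(p'\co\E'\to\Ba\) and produces a relative core \(p_{\mathrm c}\co\E_{\mathrm c}\to\Ba\) of \(p'\). This core is obtained from \(p'\) by successive removals of beat objects of the successive restrictions. Prepending the removal of \(e\) shows that \(p_{\mathrm c}\) is obtained from \(p\) by successive removals, and it is minimal. Hence \(p_{\mathrm c}\) is a relative core of \(p\). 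The process must terminate because \(\E\) has finitely many objects; the empty category is trivially minimal.

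For the deformation retraction, Proposition~\ref{prop:relative-beat-reduction} supplies a retraction \(r\co\E\to\E'\) over \(\Ba\) and a natural transformation \(\alpha\co\inc r\Rightarrow\id_\E\) with vertical components. Induction supplies a retraction \(r'\co\E'\to\E_{\mathrm c}\) over \(\Ba\), with inclusion \(\inc'\), together with a zigzag of vertical natural transformations connecting \(\inc' r'\) to \(\id_{\E'}\). Take \(R=r'r\), which is a retraction onto \(\E_{\mathrm c}\) over \(\Ba\) because both \(r\) and \(r'\) are.

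The main bookkeeping step is to connect \(\inc\inc' r' r\) to \(\id_\E\) by a zigzag over \(\Ba\) that is stationary on \(\E_{\mathrm c}\); the word ``strong'' requires this. Whiskering the zigzag for \(\inc'r'\simeq\id_{\E'}\) on the left by \(\inc\) and on the right by \(r\) yields a zigzag from \(\inc\inc'r'r\) to \(\inc r\). Whiskering preserves verticality, since the components are images under \(\inc\) of vertical morphisms. Concatenating with \(\alpha\co\inc r\Rightarrow\id_\E\) gives the required zigzag. Its components at objects of \(\E_{\mathrm c}\) are identities. For \(\alpha\) this holds because \(\alpha_x=1_x\) for \(x\neq e\). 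For the whiskered zigzag it holds because \(r\) fixes \(\E_{\mathrm c}\subseteq\E'\) pointwise and the inductive homotopy is stationary there. By the equivalence between zigzags and homotopies \(\E\times\I_m\to\E\), this is a strong deformation retraction in \(\cat/\Ba\).
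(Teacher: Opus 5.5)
Your proposal is correct and follows essentially the same route as the paper. Both arguments iterate Proposition~\ref{prop:relative-beat-reduction} on the successive restrictions until no beat object of the restricted functor remains, then compose the retractions and concatenate the whiskered zigzags of vertical natural transformations. Your induction on the number of objects simply makes the paper's bookkeeping explicit. Your extra verification that the homotopy is stationary on \(\E_{\mathrm c}\) strengthens what the paper proves, which is only \(r\inc=\id_{\E_{\mathrm c}}\) and \(\inc r\simeq\id_{\E}\) over \(\Ba\); the paper's ``strong'' refers to strong homotopy rather than a relative condition, so this extra check is harmless but not required.
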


\begin{proof}
If \(p\) has a beat object, remove it using
Proposition~\ref{prop:relative-beat-reduction}. Repeating the construction
gives a descending sequence of full subcategories
\(\E=\E_0\supset\E_1\supset\cdots\). Since \(\E\) is finite, the
process terminates at a restriction
\(p_{\mathrm c}\co\E_{\mathrm c}\to\Ba\) with no beat objects.

At every step, the inclusion admits a retraction over \(\Ba\), and its
composite with the retraction is connected to the identity by a natural
transformation over \(\Ba\), in one of the two possible directions.
Composing the retractions and concatenating the resulting zigzags gives a
retraction \(r\co\E\to\E_{\mathrm c}\) over \(\Ba\) such that
\(r\inc=\id_{\E_{\mathrm c}}\) and \(\inc r\) is strongly homotopic to
\(\id_{\E}\) over \(\Ba\).
\end{proof}

For an object \(e\) of a finite acyclic category, its height is the maximum
length of a sequence of nonidentity morphisms ending at \(e\). Dually, the coheight of \(e\) is the maximum length of a sequence of
nonidentity morphisms starting at \(e\).
\begin{lemma}[Relative rigidity]
\label{lem:relative-rigidity}
Let \(p\co\E\to\Ba\) be a functor between finite acyclic categories
which is minimal over \(\Ba\). If \(h\co\E\to\E\) is a functor over
\(\Ba\) and \(h\) is strongly homotopic to \(\id_{\E}\) through
functors and natural transformations over \(\Ba\), then
\(h=\id_{\E}\).
\end{lemma}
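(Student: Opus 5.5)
We reduce to a single natural transformation over \(\Ba\) and then argue by induction on height (or coheight), as in Tanaka's proof of Lemma~\ref{lem:rigidity}. A strong homotopy over \(\Ba\) is a finite zigzag
\[
\id_{\E}=F_0\Longleftrightarrow F_1\Longleftrightarrow\cdots\Longleftrightarrow F_n=h
\]
of natural transformations whose components are all vertical. It therefore suffices to prove the following claim. \emph{If \(F\co\E\to\E\) is a functor over \(\Ba\) and there is a natural transformation over \(\Ba\) in either direction between \(F\) and \(\id_{\E}\), then \(F=\id_{\E}\).} Once this holds, induction along the zigzag gives \(F_1=\id_{\E}\), then \(F_2=\id_{\E}\), and so on up to \(h=\id_{\E}\). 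By duality (reversing all arrows exchanges down and up beat objects of \(p\) and preserves verticality), we only need to treat \(\alpha\co F\Rightarrow\id_{\E}\) with vertical components.

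First we show that \(F\) is the identity on objects, by induction on height. Take \(x\) with \(F(y)=y\) for every \(y\) of smaller height, and suppose \(F(x)\neq x\). Then \(\alpha_x\co F(x)\to x\) is a nonidentity vertical morphism. For any nonidentity \(v\co z\to x\), the object \(z\) has smaller height, so \(F(z)=z\). Naturality then gives
\[
v=v\circ\alpha_z=\alpha_x\circ F(v),
\]
where \(\alpha_z\) is an endomorphism of \(z\), hence \(1_z\) by acyclicity. So every nonidentity \(v\) into \(x\) factors through \(\alpha_x\). This factorization is unique, because \(\alpha_x\circ w=v\) with \(w\co z\to F(x)\) forces, via naturality at \(w\) and the fact that \(F(z)=z\), that \(w=F(v)\). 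The fact needed here is \(\alpha_{F(x)}=1\). One way to obtain it is to first show \(F^2=F\) style identities, but more simply we can argue by maximality. So \(\alpha_x\) is terminal in \(\widehat{\E\downarrow x}\), and since it is vertical, \(x\) is a down beat object of \(p\). This contradicts minimality. Hence \(F(x)=x\), and then \(\alpha_x=1_x\).

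Next we show \(F\) is the identity on morphisms. Once \(F\) fixes all objects, every component \(\alpha_x\) is an endomorphism, hence \(1_x\). Naturality then gives \(F(v)=v\) for all \(v\). So \(F=\id_{\E}\).

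The main obstacle is the uniqueness of the factorization through \(\alpha_x\). Here we must verify that any \(w\co z\to F(x)\) with \(\alpha_x w=v\) equals \(F(v)\). The approach we would try first is to choose \(x\) of \emph{minimal} height among the objects not fixed by \(F\). Naturality at \(w\) gives
\[
\alpha_{F(x)}\circ F(w)=w\circ\alpha_z=w.
\]
Combining this with \(F(v)=F(\alpha_x)F(w)\) and naturality at \(\alpha_x\), namely \(\alpha_x\circ F(\alpha_x)=\alpha_x\circ\alpha_{F(x)}\), should let us compare \(w\) with \(F(v)\). The point that needs care is whether \(\alpha_x\) is a monomorphism on the relevant morphisms. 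Note that \(F(x)\) has height smaller than that of \(x\), because \(\alpha_x\) is a nonidentity morphism into \(x\). So if \(F(x)\) is not fixed by \(F\), it is an unfixed object of smaller height, contradicting minimality. Hence \(F(F(x))=F(x)\) and \(\alpha_{F(x)}=1\), and the displayed identity gives \(F(w)=w\). Then
\[
F(v)=F(\alpha_x)\,w,\qquad F(\alpha_x)=\alpha_x\circ\alpha_{F(x)}\ \text{up to naturality}.
\]
We expect the remaining details to reduce, by acyclicity, to \(F(\alpha_x)=1_{F(x)}\). This is plausible because \(F(\alpha_x)\) is an endomorphism of \(F(x)\), since \(F(F(x))=F(x)\). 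That yields \(F(v)=w\), which is the required uniqueness.
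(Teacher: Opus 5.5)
Your proposal is correct and follows essentially the same route as the paper. A minimal-height object \(x\) with \(F(x)\neq x\) forces \(F(F(x))=F(x)\) and \(\alpha_{F(x)}=1\); then \(F(\alpha_x)=1_{F(x)}\) by acyclicity, so \(\alpha_x\) is a vertical terminal object of \(\widehat{\E\downarrow x}\), contradicting minimality over \(\Ba\). Duality handles the other direction, and induction along the zigzag finishes the proof.

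In a write-up, drop the stray remark about ``maximality'' and the ill-typed identity \(F(\alpha_x)=\alpha_x\circ\alpha_{F(x)}\). The acyclicity argument in your last paragraph already settles uniqueness.
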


\begin{proof}
For an object \(e\in\E\), let \(\operatorname{ht}(e)\) denote its height,
that is, the maximal length of a chain of nonidentity morphisms ending at
\(e\). Since \(\E\) is finite and acyclic, this is well defined.

We first consider a natural transformation over \(\Ba\),
\(\alpha\co h\Rightarrow\id_{\E}\), and prove that \(h=\id_{\E}\).
Suppose otherwise, and choose an object \(e\) of minimal height such that
either \(h(e)\neq e\) or \(\alpha_e\neq1_e\).

The component \(\alpha_e\co h(e)\to e\) is nonidentity. Indeed, if it
were an identity, then \(h(e)=e\) and \(\alpha_e=1_e\), contrary to the
choice of \(e\). Since \(\E\) is acyclic, we have
\(\operatorname{ht}(h(e))<\operatorname{ht}(e)\). By the minimality of
the height of \(e\),
\(h(h(e))=h(e)\) and \(\alpha_{h(e)}=1_{h(e)}\).

We claim that \(\alpha_e\) is terminal in
\(\widehat{\E\downarrow e}\). Let \(g\co y\to e\) be a nonidentity
morphism. Then \(\operatorname{ht}(y)<\operatorname{ht}(e)\), so
\(h(y)=y\) and \(\alpha_y=1_y\). Naturality of \(\alpha\) gives
\(\alpha_e\circ h(g)=g\circ\alpha_y=g\). Thus \(g\) factors through
\(\alpha_e\).

To prove uniqueness, let \(k\co y\to h(e)\) satisfy
\(\alpha_e\circ k=g\). Every object involved below has height smaller
than that of \(e\), and hence \(h\) acts identically on these objects and
on the morphisms between them. More explicitly, naturality of \(\alpha\)
and the equalities
\(\alpha_y=1_y=\alpha_{h(e)}\) imply \(h(k)=k\). Moreover,
\(h(\alpha_e)\) is an endomorphism of \(h(e)\), because
\(h(h(e))=h(e)\), and therefore \(h(\alpha_e)=1_{h(e)}\) by
acyclicity. Applying \(h\) to the equality
\(\alpha_e\circ k=g\) gives
\(k=h(g)\). Hence the factorization through \(\alpha_e\) is unique.

Therefore \(e\) is a down beat object of \(\E\), witnessed by
\(\alpha_e\). Since \(\alpha\) is a natural transformation over
\(\Ba\), all its components are vertical, and in particular
\(p(\alpha_e)=1_{p(e)}\). Thus \(e\) is a down beat object of \(p\),
contradicting the minimality of \(p\). Consequently, \(h=\id_{\E}\).

The dual argument applies to a natural transformation over \(\Ba\),
\(\alpha\co\id_{\E}\Rightarrow h\). Suppose that \(h\neq\id_{\E}\),
and choose an object \(e\) of minimal coheight such that either
\(h(e)\neq e\) or \(\alpha_e\neq1_e\). Then
\(\alpha_e\co e\to h(e)\) is nonidentity. For every nonidentity morphism
\(g\co e\to y\), the object \(y\) has smaller coheight, so
\(h(y)=y\) and \(\alpha_y=1_y\). Naturality gives
\(h(g)\alpha_e=g\). If \(k\co h(e)\to y\) is another factorization with
\(k\alpha_e=g\), applying \(h\) and using
\(h(\alpha_e)=1_{h(e)}\) gives \(k=h(g)\). Hence \(\alpha_e\) is initial
in \(\widehat{e\downarrow\E}\). Since \(\alpha_e\) is vertical, \(e\)
is an up beat object of \(p\), contradicting minimality.

Finally, suppose that \(h\) is connected to \(\id_{\E}\) by a zigzag
of functors and natural transformations over \(\Ba\),
\[
h=h_0\Longleftrightarrow h_1\Longleftrightarrow\cdots
\Longleftrightarrow h_n=\id_{\E}.
\]
The last natural transformation joins \(h_{n-1}\) to \(\id_{\E}\), in
one of the two possible directions. By the preceding arguments,
\(h_{n-1}=\id_{\E}\). Repeating the argument backwards along the zigzag
gives
\[
h_{n-2}=\id_{\E},\ldots,h_0=\id_{\E}.
\]
Therefore \(h=\id_{\E}\).
\end{proof}

\begin{corollary}
Any two relative cores of \(p\) are isomorphic in \(\cat/\Ba\).
\end{corollary}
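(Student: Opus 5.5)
Let \(p_1\co\E_1\to\Ba\) and \(p_2\co\E_2\to\Ba\) be two relative cores of \(p\co\E\to\Ba\), obtained by successive removals of beat objects of the functor. I will imitate the classical argument for uniqueness of cores, working throughout in \(\cat/\Ba\) and using the relative rigidity lemma (Lemma~\ref{lem:relative-rigidity}) in place of Tanaka's rigidity lemma.

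\emph{Step 1: strong deformation retractions over \(\Ba\).} By Theorem~\ref{thm:relative-core} and its proof, for \(k=1,2\) there are an inclusion \(\inc_k\co\E_k\to\E\) and a retraction \(r_k\co\E\to\E_k\), both over \(\Ba\), such that \(r_k\inc_k=\id_{\E_k}\) and \(\inc_kr_k\simeq\id_{\E}\) through a zigzag of natural transformations over \(\Ba\). The zigzag is a concatenation of the single transformations produced by Proposition~\ref{prop:relative-beat-reduction}; each has vertical components, and whiskering by the retractions constructed along the way keeps the components vertical.

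\emph{Step 2: the comparison functors.} Define \(f=r_2\inc_1\co\E_1\to\E_2\) and \(g=r_1\inc_2\co\E_2\to\E_1\). Both are functors over \(\Ba\), being composites of such. Then
\[
gf=r_1(\inc_2r_2)\inc_1\simeq r_1\inc_1=\id_{\E_1},
\]
and the homotopy is obtained by whiskering the zigzag \(\inc_2r_2\simeq\id_{\E}\) on the left by \(r_1\) and on the right by \(\inc_1\). Whiskering a natural transformation with vertical components by functors over \(\Ba\) again gives vertical components: \(p_1(r_1(\alpha_{x}))=p(\alpha_x)\) is an identity. Hence \(gf\simeq\id_{\E_1}\) over \(\Ba\), and symmetrically \(fg\simeq\id_{\E_2}\) over \(\Ba\).

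\emph{Step 3: rigidity.} Since \(p_1\) and \(p_2\) are minimal over \(\Ba\), Lemma~\ref{lem:relative-rigidity} gives \(gf=\id_{\E_1}\) and \(fg=\id_{\E_2}\). Therefore \(f\) is an isomorphism in \(\cat/\Ba\).

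The main obstacle is the bookkeeping in Steps 1 and 2: one must check that every homotopy involved lies over \(\Ba\) in the strict sense required by Lemma~\ref{lem:relative-rigidity}. That means all intermediate functors commute with the projections and all components are vertical. I would verify this in two parts. First, for each elementary step, \(p\,\inc\,r=p\) and \(\alpha\) has vertical components, as shown in Proposition~\ref{prop:relative-beat-reduction}. Second, composition with functors over \(\Ba\) preserves both properties. Everything else is formal.
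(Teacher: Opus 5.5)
Your proposal is correct and follows the paper's proof: compare the two cores via \(r_2\inc_1\) and \(r_1\inc_2\), note that both composites are strongly homotopic over \(\Ba\) to the identities, and conclude with Lemma~\ref{lem:relative-rigidity}. Your extra check that whiskering by functors over \(\Ba\) keeps the components vertical is the bookkeeping the paper leaves implicit.
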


\begin{proof}
Let \(p_1\co\E_1\to\Ba\) and \(p_2\co\E_2\to\Ba\) be relative cores
of \(p\), with inclusions \(\inc_k\) and retractions \(r_k\). The composites
\(r_2\inc_1\co\E_1\to\E_2\) and
\(r_1\inc_2\co\E_2\to\E_1\) lie over \(\Ba\). Their two composites are
strongly homotopic over \(\Ba\) to the corresponding identity functors.
Lemma~\ref{lem:relative-rigidity} implies that both composites are
identities. Hence the two relative cores are isomorphic over \(\Ba\).
\end{proof}

\subsection{Minimal bifibrations as fiber bundles}

The following result is the categorical counterpart of
\cite[Theorem 6.9]{cianci2019fibrationsfinitetopologicalspaces}.

\begin{proposition}
\label{prop:minimal-bifibration-bundle}
Let \(p\co\E\to\Ba\) be a Grothendieck bifibration over a connected
category \(\Ba\). Suppose that every fiber \(\E_b\) is a minimal finite
acyclic category. Then, after choosing an opcleavage, the associated
transport defines a fiber bundle
\[
P_p\co\Ba\longrightarrow\cat,
\qquad
P_p(b)=\E_b,
\qquad
P_p(f)=f_!,
\]
and \(p\) is isomorphic over \(\Ba\) to the projection
\[
\pi_{P_p}\co\int^{\Ba}P_p\longrightarrow\Ba.
\]
\end{proposition}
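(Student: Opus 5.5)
The plan is to use the rigidity lemma twice. First it will show that every transport functor \(f_!\) is an isomorphism. Then it will show that all coherence data of the transport pseudofunctor is strictly trivial. The isomorphism over \(\Ba\) is then the standard comparison functor out of the Grothendieck construction.

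\textbf{Step 1: transports are isomorphisms.} Choose an opcleavage \(\varphi_{f,x}\co x\to f_!x\) and a cleavage \(\psi_{f,y}\co f^*y\to y\). For \(f\co b\to b'\), let \(\operatorname{Hom}_f(x,y)\) denote the set of morphisms \(x\to y\) of \(\E\) lying over \(f\). The two universal properties give bijections
\[
\operatorname{Hom}_{\E_{b'}}(f_!x,y)\cong\operatorname{Hom}_f(x,y)\cong\operatorname{Hom}_{\E_b}(x,f^*y),
\]
natural in \(x\) and \(y\), so \(f_!\dashv f^*\). The unit \(\id_{\E_b}\Rightarrow f^*f_!\) and the counit \(f_!f^*\Rightarrow\id_{\E_{b'}}\) are natural transformations between endofunctors of the fibers. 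Their components are morphisms of the fibers, hence vertical.

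Since \(\E_b\) and \(\E_{b'}\) are minimal finite acyclic categories, Lemma~\ref{lem:rigidity} applies to each of them. It gives \(f^*f_!=\id_{\E_b}\) and \(f_!f^*=\id_{\E_{b'}}\). Hence every \(f_!\) is an isomorphism with inverse \(f^*\). Because \(\Ba\) is connected, any two fibers are joined by a zigzag of such isomorphisms. So all fibers are isomorphic to a fixed \(\F\), for instance \(\F=\E_{b_0}\).

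\textbf{Step 2: strictness.} For composable \(f\) and \(g\), the lifts \(\varphi_{gf,x}\) and \(\varphi_{g,f_!x}\circ\varphi_{f,x}\) are both opcartesian over \(gf\) with domain \(x\). They therefore differ by a unique vertical isomorphism \(g_!f_!x\to(gf)_!x\). In an acyclic category every isomorphism is an identity: an isomorphism \(x\to y\) forces \(x=y\), and \(\operatorname{Hom}(x,x)=\{1_x\}\).

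It follows that \((gf)_!x=g_!f_!x\) and \(\varphi_{gf,x}=\varphi_{g,f_!x}\varphi_{f,x}\). In the same way, \(\varphi_{1_b,x}\) differs from \(1_x\) by a vertical isomorphism, so \(\varphi_{1_b,x}=1_x\). Thus any opcleavage is automatically closed. The coherence isomorphisms \((gf)_!\cong g_!f_!\) and \((1_b)_!\cong\id\) are identities on morphisms as well, since their components are identities. Consequently \(P_p\) is a strict functor \(\Ba\to\cat\), and by Step 1 it is a fiber bundle.

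\textbf{Step 3: the comparison functor.} Define \(\Phi\co\int^{\Ba}P_p\to\E\) by \(\Phi(b,x)=x\) and \(\Phi(f,\alpha)=\alpha\circ\varphi_{f,x}\).
\begin{itemize}
\item \(\Phi\) preserves identities because \(\varphi_{1_b,x}=1_x\).
\item \(\Phi\) preserves composition by closedness together with the defining relation \(\varphi_{g,f_!x}\circ\beta=g_!(\beta)\circ\varphi_{g,x'}\) for vertical \(\beta\co x\to x'\). This relation is exactly the composition law \((g,\beta)\circ(f,\alpha)=\bigl(gf,\beta\circ P_p(g)(\alpha)\bigr)\) of the Grothendieck construction.
\item \(\Phi\) commutes with the projections to \(\Ba\).
\item \(\Phi\) is bijective on objects, since every object of \(\E\) lies in exactly one fiber.
\item \(\Phi\) is bijective on morphisms by the opcartesian property. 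Every morphism \(\beta\co x\to z\) over \(f\) factors uniquely as \(\alpha\circ\varphi_{f,x}\) with \(\alpha\) vertical.
\end{itemize}
Hence \(\Phi\) is an isomorphism in \(\cat/\Ba\).

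The main obstacle is Step 1. One must verify carefully that the adjunction \(f_!\dashv f^*\) holds for an arbitrary, possibly non-closed cleavage, and that its unit and counit are genuine natural transformations of endofunctors of a single fiber. Only then does Lemma~\ref{lem:rigidity} apply. Once this is in place, the strictness argument in Step 2 and the comparison in Step 3 are routine.
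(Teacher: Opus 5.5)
Your proposal is correct and follows the paper's proof essentially step for step. Both arguments run as follows: the adjunction \(f_!\dashv f^*\) comes from a cleavage and an opcleavage; Lemma~\ref{lem:rigidity} is applied to its unit and counit; acyclicity of the fibers forces the comparison isomorphisms to be identities, so the opcleavage is closed and \(P_p\) is strict; and the comparison functor \(\Phi(f,\alpha)=\alpha\circ\varphi_{f,x}\) is bijective by opcartesian factorization. The only slip is notational, in Step 3: for vertical \(\beta\co x\to x'\) the relation should read \(\varphi_{g,x'}\circ\beta=g_!(\beta)\circ\varphi_{g,x}\), since your version does not type-check, and it is used with \(\alpha\co f_!x\to y\) in place of \(\beta\).
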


\begin{proof}
Choose a cleavage and an opcleavage for \(p\). For every morphism
\(f\co b\to b'\), they determine transport functors
\[
f_!\co\E_b\longrightarrow\E_{b'},
\qquad
f^*\co\E_{b'}\longrightarrow\E_b,
\]
with an adjunction \(f_!\dashv f^*\). The unit and counit give strong
homotopies
\[
\id_{\E_b}\simeq f^*f_!,
\qquad
f_!f^*\simeq\id_{\E_{b'}}.
\]
Since the fibers are minimal finite acyclic categories,
Lemma~\ref{lem:rigidity} gives
\[
f^*f_!=\id_{\E_b},
\qquad
f_!f^*=\id_{\E_{b'}}.
\]
Hence \(f_!\) and \(f^*\) are mutually inverse isomorphisms.

The chosen opcleavage provides canonical coherence isomorphisms
\[
(gf)_!\cong g_!f_!,
\qquad
(1_b)_!\cong\id_{\E_b}.
\]
Every isomorphism in an acyclic category is an identity. Consequently, every natural isomorphism between functors with codomain an
acyclic category is an identity natural transformation. The coherence isomorphisms are therefore
identities, and
\[
(gf)_!=g_!f_!,
\qquad
(1_b)_!=\id_{\E_b}.
\]
Moreover, the vertical comparison isomorphisms between the chosen lift of
a composite and the composite of the chosen lifts are identities. Hence the
chosen opcleavage is closed.
Thus the assignments
\[
P_p(b)=\E_b,
\qquad
P_p(f)=f_!
\]
define a strict functor \(P_p\co\Ba\to\cat\).

Every \(P_p(f)=f_!\) is an isomorphism. Since \(\Ba\) is connected, all
fibers are isomorphic to any fixed fiber \(\E_{b_0}\). Hence \(P_p\) is a
fiber bundle over \(\Ba\).

It remains to compare its Grothendieck construction with \(\E\). Let
\(
\varphi_{f,e}\co e\longrightarrow f_!e
\)
denote the chosen opcartesian lift of \(f\co b\to b'\) from
\(e\in\E_b\). Define
\(
\Phi\co\int^{\Ba}P_p\longrightarrow\E
\)
on objects by \(\Phi(b,e)=e\), and on morphisms by
\(
\Phi(f,\alpha)=\alpha\circ\varphi_{f,e},
\)
where
\(\alpha\co f_!e\to e'\) is vertical.

By the universal property of \(\varphi_{f,e}\), every morphism
\(u\co e\to e'\) of \(\E\) over \(f\) admits a unique factorization
\(
u=\alpha\circ\varphi_{f,e}
\)
with \(\alpha\) vertical. Therefore \(\Phi\) is bijective on objects and
morphisms. The identities and composition in
\(\int^{\Ba}P_p\) correspond to those of \(\E\), because the chosen
opcleavage is closed. Hence \(\Phi\) is an isomorphism of categories.
Moreover, it commutes with the projections to \(\Ba\). Thus
\(
p\cong\pi_{P_p}
\)
in the slice category \(\cat/\Ba\).
\end{proof}
\section{Monodromy and classification of fiber bundles}\label{sec:Monodromy_categorical_fiber_bundles}

\subsection{Fundamental groupoid and monodromy}\label{sec:fundgroup}

\subsubsection{Localization and fundamental group}

For a category \(\CC\), let
\(
\Pi_1(\CC):=\CC[\operatorname{Mor}(\CC)^{-1}]
\)
denote its localization at all morphisms. This is the \emph{fundamental
groupoid} of \(\CC\). Its morphisms are represented by zigzags in \(\CC\),
modulo the usual localization relations.

The canonical functor
\(
\ell_{\CC}\co\CC\longrightarrow\Pi_1(\CC)
\)
is universal among functors sending every morphism of \(\CC\) to an
isomorphism. Thus, if \(F\co\CC\to\D\) sends every morphism to an
isomorphism, there is a unique functor
\(
\overline{F}\co\Pi_1(\CC)\longrightarrow\D
\)
such that \(F=\overline{F}\circ\ell_{\CC}\). We shall usually denote the
induced functor again by \(F\).

The universal property also applies to natural transformations.

\begin{lemma}\label{lemma:universal_localization}
Let \(F,G\co\CC\to\D\) send every morphism of \(\CC\) to an isomorphism.
Restriction along \(\ell_{\CC}\) induces a bijection
\[
\operatorname{Nat}_{\Pi_1(\CC)}(F,G)
\cong
\operatorname{Nat}_{\CC}(F,G).
\]
\end{lemma}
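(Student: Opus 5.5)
The approach is to realize natural transformations as functors into an arrow category and then apply the universal property of \(\ell_{\CC}\) to that functor. Throughout, \(F\) and \(G\) also denote their unique extensions \(\overline F,\overline G\co\Pi_1(\CC)\to\D\), so \(F=\overline F\ell_{\CC}\) and \(G=\overline G\ell_{\CC}\). Restriction sends \(\theta\co\overline F\Rightarrow\overline G\) to the whiskering \(\theta\ell_{\CC}\), with components \(\theta_{\ell(c)}\). Since \(\ell_{\CC}\) is the identity on objects, a transformation is determined by its components on objects of \(\CC\). Restriction is therefore injective, and the whole content of the lemma is surjectivity.

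For surjectivity, let \(\D^{\to}\) be the arrow category of \(\D\). Its objects are morphisms of \(\D\), and its morphisms are commutative squares. There are domain and codomain functors \(s,t\co\D^{\to}\to\D\). A natural transformation \(\alpha\co F\Rightarrow G\) over \(\CC\) is the same as a functor \(A\co\CC\to\D^{\to}\) with \(sA=F\) and \(tA=G\). It sends \(c\) to \(\alpha_c\), and a morphism \(f\) to the square \((F(f),G(f))\).

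The key observation is that a morphism of \(\D^{\to}\) whose two components are isomorphisms is itself an isomorphism in \(\D^{\to}\). Its inverse is the pair of inverses, and that square commutes because you can invert both sides of the original square. By hypothesis \(F(f)\) and \(G(f)\) are isomorphisms, so \(A\) inverts every morphism of \(\CC\). By the universal property it factors uniquely as \(A=\overline A\ell_{\CC}\).

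Next I check that \(s\overline A=\overline F\) and \(t\overline A=\overline G\). Both \(s\overline A\) and \(\overline F\) restrict along \(\ell_{\CC}\) to \(F\), so they agree by uniqueness in the universal property; the same argument gives \(t\overline A=\overline G\). Hence \(\overline A\) corresponds to a natural transformation \(\overline F\Rightarrow\overline G\) on \(\Pi_1(\CC)\). This transformation restricts to \(\alpha\), which proves surjectivity.

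The only step needing any care is the claim that \(A\) inverts morphisms, that is, that the inverse square commutes. This is a one-line check: from \(G(f)\alpha_c=\alpha_{c'}F(f)\), multiply on the left by \(G(f)^{-1}\) and on the right by \(F(f)^{-1}\).

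As an alternative to the arrow category, one can argue directly. Define \(\theta_c=\alpha_c\), and check naturality on generators of \(\Pi_1(\CC)\): the morphisms \(\ell(f)\) and their formal inverses \(\ell(f)^{-1}\). Naturality for \(\ell(f)^{-1}\) follows by inverting the square, and naturality is closed under composition. This also works, but it requires knowing that every morphism of \(\Pi_1(\CC)\) is a composite of such generators. The arrow-category argument avoids that description entirely.
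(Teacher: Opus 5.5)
Your proof is correct, and it takes a different route from the paper's for surjectivity. Injectivity is handled the same way in both, since \(\ell_{\CC}\) is the identity on objects.

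The paper argues directly. It inverts the naturality square for \(f\) to get naturality for the formal inverse \(\ell_{\CC}(f)^{-1}\). It then concludes naturality for every morphism of \(\Pi_1(\CC)\), tacitly using its earlier description of these morphisms as zigzags, that is, composites of images and formal inverses. That is exactly your "alternative" paragraph.

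Your main argument encodes \(\alpha\) as a functor \(A\co\CC\to\D^{\to}\). It then uses only the one-dimensional universal property of \(\ell_{\CC}\): once to produce \(\overline A\), and once, via uniqueness, to identify \(s\overline A=\overline F\) and \(t\overline A=\overline G\).

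The key computation is the same in both routes: the inverted square \(\alpha_c F(f)^{-1}=G(f)^{-1}\alpha_{c'}\). It appears in the paper as naturality for \(\ell_{\CC}(f)^{-1}\), and in yours as invertibility of \(A(f)\) in \(\D^{\to}\).

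What your version buys is independence from any presentation of the localization. It is the standard way of deriving the two-dimensional universal property from the one-dimensional one, so it would apply verbatim to any model of \(\Pi_1(\CC)\) that satisfies the universal property. The paper's version is shorter and more hands-on, but it leans on the generator description that the paper only asserts.
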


\begin{proof}
Only surjectivity requires verification. Let
\(\alpha\co F\Rightarrow G\) be natural on \(\CC\). For every
\(f\co c\to d\), naturality gives
\(
\alpha_d\circ F(f)=G(f)\circ\alpha_c.
\)
Since \(F(f)\) and \(G(f)\) are isomorphisms, this is equivalent to
\(
\alpha_c\circ F(f)^{-1}
 =
G(f)^{-1}\circ\alpha_d.
\)

Consequently, the family \((\alpha_c)_{c\in\CC}\) is natural with respect
to both the images of the morphisms of \(\CC\) and their formal inverses.
It is therefore natural with respect to every morphism of
\(\Pi_1(\CC)\). Uniqueness is immediate because the localization does not
change the objects.
\end{proof}

If \(\CC\) is connected and \(c_0\in\CC\), we write
\(
\pi_1(\CC,c_0)
 =
\Auto_{\Pi_1(\CC)}(c_0).
\)
The inclusion of the full subgroupoid on \(c_0\) is an equivalence
\(
\B\pi_1(\CC,c_0)\simeq\Pi_1(\CC),
\)
where a group is regarded as a one-object groupoid.

\subsubsection{A presentation of the fundamental group}

We recall the presentation of \(\pi_1(\CC,c_0)\) given in
\cite[Definition 11.3.5 and Proposition 11.3.6]{Richter_2020}.

A set \(X\) of morphisms of \(\CC\) is a \emph{tree} if the graph
obtained by forgetting the orientations of its morphisms is a tree. If
\(\CC\) is connected, it is a \emph{spanning tree} if it contains every
object as a vertex.

\begin{proposition}[{\cite[Proposition 11.3.6]{Richter_2020}}]
Let \(\CC\) be connected, let \(c_0\in\CC\), and let \(X\) be a spanning
tree. Then \(\pi_1(\CC,c_0)\) admits a presentation with one generator
\([f]\) for every morphism \(f\) of \(\CC\), subject to the relations
\[
[f]=1 \quad\text{if } f\in X,
\qquad
[1_c]=1,
\qquad
[g\circ f]=[g][f]
\]
whenever \(f\) and \(g\) are composable.
\end{proposition}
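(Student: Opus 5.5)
The plan is to build the group \(G\) with the stated presentation and show directly that it is isomorphic to \(\Auto_{\Pi_1(\CC)}(c_0)\). We do this with two mutually inverse homomorphisms; no topological input is needed. To define them we use the spanning tree \(X\) to choose canonical arrows. For every object \(c\) there is a unique reduced path in the underlying graph of \(X\) from \(c_0\) to \(c\). Reading each tree morphism forwards or backwards as its image or inverse in \(\Pi_1(\CC)\) turns this path into an isomorphism \(\gamma_c\co c_0\to c\) in \(\Pi_1(\CC)\), with \(\gamma_{c_0}=1\). We then check that for every \(f\co c\to d\) in \(X\) we have \(\ell_{\CC}(f)\circ\gamma_c=\gamma_d\). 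This holds because the tree path to one endpoint is the tree path to the other, extended or shortened by \(f\).

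\emph{The map \(G\to\pi_1\).} Define \(\Phi\co G\to\pi_1(\CC,c_0)\) on generators by \(\Phi[f]=\gamma_d^{-1}\ell_{\CC}(f)\gamma_c\) for \(f\co c\to d\). The relations hold:
\begin{itemize}
\item \([f]=1\) for \(f\in X\) holds by the tree identity above;
\item \([1_c]=1\) holds because \(\ell_{\CC}\) preserves identities;
\item \([g\circ f]=[g][f]\) holds by functoriality, since the middle factors \(\gamma\gamma^{-1}\) cancel.
\end{itemize}

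\emph{The map \(\pi_1\to G\).} Here we use the universal property of the localization. Let \(\B G\) be the one-object groupoid of \(G\), and define \(F\co\CC\to\B G\) by sending every object to the single object and every morphism \(f\) to \([f]\). The last two relations say exactly that \(F\) is a functor. Every group element is invertible, so \(F\) inverts all morphisms and factors uniquely as \(\overline F\co\Pi_1(\CC)\to\B G\). Let \(\Psi\) be the restriction of \(\overline F\) to \(\Auto(c_0)\).

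\emph{\(\Psi\Phi=\id\).} Each \(\gamma_c\) is a composite of images of tree morphisms and their inverses, so \(\overline F(\gamma_c)=1\). Hence \(\Psi\Phi[f]=[f]\) on generators, and therefore on all of \(G\).

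\emph{\(\Phi\Psi=\id\).} This is the step we expect to be the main obstacle, because it needs a description of the automorphisms in \(\Pi_1(\CC)\). We will use that every morphism of the localization is a composite of the \(\ell_{\CC}(f)\) and their inverses. A loop at \(c_0\) is therefore a zigzag
\[c_0=c^0\to c^1\leftarrow\cdots c^n=c_0.\]
We insert \(\gamma_{c^i}\gamma_{c^i}^{-1}\) between consecutive letters. This rewrites the loop as a product of the terms \(\gamma_d^{-1}\ell(f)^{\pm1}\gamma_c\), which are the elements \(\Phi[f]^{\pm1}\). So \(\Phi\) is surjective. Then \(\Phi\Psi\Phi=\Phi\) forces \(\Phi\Psi=\id\), and \(\Phi\) is an isomorphism.

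As an alternative route for the last step, we can avoid the explicit description of morphisms. The \(\gamma_c\) define an equivalence \(\Pi_1(\CC)\to\B\pi_1\) that sends \(\ell(f)\) to \(\Phi[f]\). Then \(\Phi\circ\overline F\) and this equivalence agree after composing with \(\ell_{\CC}\), so by uniqueness in the universal property they agree on \(\Pi_1(\CC)\). Restricted to \(c_0\), this gives \(\Phi\Psi=\id\).
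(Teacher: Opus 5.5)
Your proof is correct, but the paper itself proves nothing here: it only cites Richter's book. So your proposal does not follow an argument in the paper; it supplies a self-contained one. It uses only what the paper sets up, namely the definition $\pi_1(\CC,c_0)=\Auto_{\Pi_1(\CC)}(c_0)$ and the universal property of $\ell_{\CC}$.

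That is a real gain. Suppose the cited result is read in a setting where $\pi_1$ of a category means $\pi_1$ of its classifying space $|N\CC|$, with the presentation read off from its cells (a maximal tree in the $1$-skeleton, one $2$-cell per composable pair). Then applying it to the paper's localization-based $\pi_1$ silently uses the comparison $\pi_1(|N\CC|,c_0)\cong\Auto_{\Pi_1(\CC)}(c_0)$. Your argument avoids that comparison entirely.

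Of your two endings, the second is preferable. It needs only uniqueness in the universal property, not the zigzag description of morphisms in the localization. The functor $\alpha\mapsto\gamma_d^{-1}\alpha\gamma_c$ you use there is a quasi-inverse to the inclusion $\B\pi_1(\CC,c_0)\to\Pi_1(\CC)$ that the paper mentions.

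Your tree paths $\gamma_c$ and the identity $\ell_{\CC}(f)\gamma_c=\gamma_d$ for $f\in X$ are the same ingredients as in the paper's proof of Theorem~\ref{teo:spanning_tree}. In that notation $P^X(f)=\rho_P(\Phi[f])$. So your $\Phi$ is exactly what justifies the paper's later remark that the transition automorphisms of $P^X$ on edges outside $X$ determine the monodromy.

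A small remark: the relation $[1_c]=1$ is redundant, since it follows from $[1_c]=[1_c][1_c]$.
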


If \(\CC\) is presented by a quiver with relations, one may choose a
spanning tree in the underlying graph. The remaining edges provide
generators, while the relations of the category induce relations in the
fundamental group.

\begin{example}\label{ex:fundamental-S}
For the category \(S\) of Example~\ref{ex:S}, choose \(f\) as a spanning
tree. The remaining arrow \(g\) determines one generator and there are no
relations. Hence
\[
\pi_1(S,x)\cong\mathbb{Z}.
\]
\end{example}

\begin{example}\label{ex:fundamental-three-arrows}
Let \(\Ba\) be the category from
Example~\ref{Ex:Circle_Twist_S_3}. Choosing \(f_1\) as a spanning tree
leaves the generators determined by \(f_2\) and \(f_3\), with no
relations. Therefore
\[
\pi_1(\Ba,x)\cong\mathbb{Z}*\mathbb{Z}.
\]
\end{example}

\begin{example}\label{ex:fundamental-projective-plane}
For the projective plane category \(\Pa\) of Example~\ref{ex:P}, choose
\(f_1\) and \(g_1\) as a spanning tree. The defining relations become
\[
1=[g_2][f_2],
\qquad
[g_2]=[f_2].
\]
Thus
\[
\pi_1(\Pa,X)\cong\mathbb{Z}/2\mathbb{Z}.
\]
\end{example}

\begin{example}\label{ex:fundamental-klein-bottle}
Consider the total category \(\int^S K\) from
Example~\ref{ex:klein bottle}. Choose a spanning tree containing the
vertical arrows and the horizontal arrows lying over \(f\). If
\[
a=[(1_x,g)]=[(1_y,g)],
\qquad
b=[(g,1_y)],
\qquad
c=[(g,1_x)],
\]
the relations in the Grothendieck construction reduce to
\(
b=ac,
\text{ and }
ba=c.
\)
Eliminating \(c\) gives
\(
b=aba.
\)
Equivalently,
\[
\pi_1\left(\int^S K,(x,x)\right)
\cong
\langle a,b\mid b=aba\rangle,
\]
which is a standard presentation of the fundamental group of the Klein
bottle.
\end{example}

\subsubsection{Normalization of fiber bundles}\label{sec:trivia}

Every fiber bundle can be normalized in two steps: first by identifying all
its fibers with a fixed category, and then by making its transition
automorphisms trivial on a spanning tree.

\begin{proposition}[Constant fiber reduction]
\label{prop:constant-fiber-reduction}
Let \(P\co\Ba\to\cat\) be a fiber bundle with fiber \(\F\). Then \(P\) is
naturally isomorphic to a fiber bundle
\[
P^{\mathrm{c}}\co\Ba\longrightarrow\cat
\]
such that \(P^{\mathrm{c}}(b)=\F\) for every \(b\in\Ba\).
\end{proposition}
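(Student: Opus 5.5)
The plan is to transport \(P\) along a chosen family of isomorphisms identifying each fiber with \(\F\). By Definition~\ref{def:fiber_bundle}, for every object \(b\in\Ba\) there is an isomorphism of categories \(P(b)\cong\F\). Using choice, we fix for each \(b\) an isomorphism
\[
\theta_b\co P(b)\longrightarrow\F .
\]
We then define \(P^{\mathrm c}\) on objects by \(P^{\mathrm c}(b)=\F\). For a morphism \(f\co b\to b'\) we set
\[
P^{\mathrm c}(f)=\theta_{b'}\circ P(f)\circ\theta_b^{-1}\co\F\longrightarrow\F .
\]

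Next we check that \(P^{\mathrm c}\) is a fiber bundle. Functoriality follows by cancellation. For identities, \(P^{\mathrm c}(1_b)=\theta_bP(1_b)\theta_b^{-1}=\id_\F\). For a composite \(g\circ f\) with \(f\co b\to b'\) and \(g\co b'\to b''\),
\[
P^{\mathrm c}(g)P^{\mathrm c}(f)=\theta_{b''}P(g)\theta_{b'}^{-1}\theta_{b'}P(f)\theta_b^{-1}=\theta_{b''}P(gf)\theta_b^{-1}=P^{\mathrm c}(gf).
\]
Each \(P^{\mathrm c}(f)\) is a composite of isomorphisms, hence an isomorphism, and every fiber is \(\F\) itself. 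So \(P^{\mathrm c}\) satisfies Definition~\ref{def:fiber_bundle}.

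Finally, the family \(\theta=(\theta_b)_{b\in\Ba}\) is a natural transformation \(P\Rightarrow P^{\mathrm c}\). Naturality for \(f\co b\to b'\) requires \(\theta_{b'}\circ P(f)=P^{\mathrm c}(f)\circ\theta_b\), and this holds by the very definition of \(P^{\mathrm c}(f)\). Each component is an isomorphism in \(\cat\), so \(\theta\) is a natural isomorphism. Its inverse is the natural transformation with components \(\theta_b^{-1}\), whose naturality follows by inverting the naturality square.

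There is no genuine obstacle here. The only point needing care is that the isomorphisms \(\theta_b\) are arbitrary choices, so \(P^{\mathrm c}\) is well defined only up to that choice. This is harmless for the statement, but it explains why the later spanning-tree normalization is needed to pin down the transition automorphisms. If desired, one may also take \(\theta_{b_0}=\id\) when \(P(b_0)=\F\).
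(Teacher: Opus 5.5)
Your proposal is correct and follows the paper's own argument: choose isomorphisms \(P(b)\to\F\), conjugate the transition functors to define \(P^{\mathrm c}(f)\), and note that the chosen isomorphisms form a natural isomorphism \(P\Rightarrow P^{\mathrm c}\). The difference is only that you write out the functoriality and naturality checks, which the paper leaves implicit.
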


\begin{proof}
Choose isomorphisms
\(
\psi_b\co P(b)\longrightarrow\F
\)
and define
\[
P^{\mathrm{c}}(f)
 =
\psi_{b'}\circ P(f)\circ\psi_b^{-1}
\]
for every \(f\co b\to b'\). The family \((\psi_b)_{b\in\Ba}\) defines a
natural isomorphism \(P\Rightarrow P^{\mathrm{c}}\).
\end{proof}

We may therefore assume that \(P(b)=\F\) for every \(b\), so that every
transition functor belongs to \(\Auto(\F)\).

\begin{theorem}[Spanning tree normalization]
\label{teo:spanning_tree}
Let \(P\co\Ba\to\cat\) be a fiber bundle with constant fiber \(\F\), and
let \(X\) be a spanning tree of \(\Ba\). Then \(P\) is naturally isomorphic
to a fiber bundle
\(
P^X\co\Ba\longrightarrow\cat
\)
such that
\(
P^X(f)=\id_{\F}
\)
for every edge \(f\in X\).
\end{theorem}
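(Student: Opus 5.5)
We plan to build, for each object \(b\), an automorphism \(\psi_b\in\Auto(\F)\) such that the conjugated functor
\[
P^X(f)=\psi_{b'}\circ P(f)\circ\psi_b^{-1},\qquad f\co b\to b',
\]
is the identity on every edge of \(X\). As in the proof of Proposition~\ref{prop:constant-fiber-reduction}, any family \((\psi_b)_{b\in\Ba}\) of automorphisms of \(\F\) makes \(P^X\) a functor \(\Ba\to\cat\) with constant fiber \(\F\) and invertible transition functors. The family then defines a natural isomorphism \(P\Rightarrow P^X\): naturality is the identity \(P^X(f)\psi_b=\psi_{b'}P(f)\), which holds by construction. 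So everything reduces to choosing the \(\psi_b\).

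To choose them, fix a root \(b_0\) and set \(\psi_{b_0}=\id_{\F}\). Since \(X\) is a spanning tree, each object \(b\) is joined to \(b_0\) by a unique reduced path in the underlying undirected graph of \(X\), say
\[
b_0=c_0 - c_1 - \cdots - c_k=b,
\]
where each edge \(e_i\in X\) points either \(c_{i-1}\to c_i\) or \(c_i\to c_{i-1}\). Let \(\varepsilon_i=+1\) in the first case and \(\varepsilon_i=-1\) in the second, and define
\[
\tau_b=P(e_k)^{\varepsilon_k}\circ\cdots\circ P(e_1)^{\varepsilon_1},\qquad \psi_b=\tau_b^{-1}.
\]
Here \(\tau_b\) is the transport along the tree path from \(b_0\) to \(b\). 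Equivalently, the \(\psi_b\) can be defined by induction on the distance from \(b_0\) in the tree.

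Now check an edge \(f\co b\to b'\) in \(X\). By uniqueness of tree paths, either the path to \(b'\) is the path to \(b\) followed by \(f\), giving \(\tau_{b'}=P(f)\tau_b\), or the path to \(b\) is the path to \(b'\) followed by \(f\) traversed backwards, giving \(\tau_b=P(f)^{-1}\tau_{b'}\). In both cases \(\tau_{b'}=P(f)\tau_b\), and therefore
\[
P^X(f)=\tau_{b'}^{-1}P(f)\tau_b=\id_{\F}.
\]

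The main point needing care is that \(\psi\) is well defined. This relies on the tree property: the path from \(b_0\) to each object is unique, so there are no cycles in \(X\) that could force inconsistent choices. With that in hand, the edge case analysis above is the only remaining step, and it is straightforward.
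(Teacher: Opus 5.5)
Your proof is correct and follows the paper's argument. Your $\tau_b$ is the paper's $P(\gamma_b)$, the transport along the unique tree path: the paper gets it by extending $P$ to $\Pi_1(\Ba)$, while you write it out as the composite $P(e_k)^{\varepsilon_k}\circ\cdots\circ P(e_1)^{\varepsilon_1}$, and the same two-case analysis of tree edges gives $P^X(f)=\id_{\F}$; the only cosmetic difference is that your natural isomorphism $\psi\co P\Rightarrow P^X$ is the inverse of the paper's $\theta\co P^X\Rightarrow P$.
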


\begin{proof}
Fix \(b_0\in\Ba\). For every \(b\in\Ba\), let
\(
\gamma_b\co b_0\longrightarrow b
\)
be the unique path in \(X\), regarded as a morphism of \(\Pi_1(\Ba)\).
Since \(P\) sends every morphism to an isomorphism, it factors through
\(\Pi_1(\Ba)\), and hence \(P(\gamma_b)\in\Auto(\F)\) is defined.

Set \(P^X(b)=\F\) and, for \(f\co b\to b'\), define
\[
P^X(f)
 =
P(\gamma_{b'})^{-1}\circ P(f)\circ P(\gamma_b).
\]
Functoriality follows immediately from that of \(P\). If \(f\) is an edge
of \(X\), then either
\[
\gamma_{b'}=f\circ\gamma_b
\qquad\text{or}\qquad
\gamma_b=f^{-1}\circ\gamma_{b'}
\]
in \(\Pi_1(\Ba)\), according to the orientation of \(f\). In either case,
\(
P^X(f)=\id_{\F}.
\)

Finally, the family
\(
\theta_b=P(\gamma_b)\co P^X(b)\longrightarrow P(b)
\)
defines a natural isomorphism
\(
\theta\co P^X\Rightarrow P.
\)
\end{proof}

The normalization \(P^X\) is completely determined by the transition
automorphisms associated with the edges outside \(X\), subject to the
relations of \(\Ba\). Equivalently, it is determined by its monodromy
representation.

\subsection{Monodromy and strict gauge transformations}\label{sec:gauge}

\subsubsection{Strict gauge group}
Let \(P\co\Ba\to\cat\) be a fiber bundle.

\begin{definition}
The \emph{gauge group} of \(P\) is
\[
\operatorname{Gau}(P)
 =
\left\{
g\in\Auto\left(\int^{\Ba}P\right)
\ \middle|\
\pi_P\circ g=\pi_P
\right\}.
\]
\end{definition}

The Grothendieck construction comes with canonical opcartesian lifts
\[
(f,1_{P(f)(x)})
 \co
(b,x)\longrightarrow(b',P(f)(x)).
\]

\begin{definition}
The \emph{strict gauge group} \(\operatorname{Gau}_{\mathrm{str}}(P)\) is
the subgroup of \(\operatorname{Gau}(P)\) consisting of the automorphisms
that preserve these canonical opcartesian lifts.
\end{definition}

Explicitly, if \(g(b,x)=(b,g_b(x))\), the strictness condition is
\[
g(f,1_{P(f)(x)})
 =
(f,1_{P(f)(g_b(x))}).
\]

\begin{proposition}\label{prop:strict-gauge-natural}
There is a natural group isomorphism
\[
\operatorname{Gau}_{\mathrm{str}}(P)
\cong
\operatorname{Nat}_{\mathrm{Iso}}(P,P).
\]
\end{proposition}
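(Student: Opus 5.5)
We construct the isomorphism in both directions explicitly and check that each construction is a group homomorphism and that the two constructions are mutually inverse. Let \(g\in\operatorname{Gau}_{\mathrm{str}}(P)\). Since \(\pi_P g=\pi_P\), the automorphism \(g\) maps each fiber \(P(b)\), viewed as the subcategory of vertical morphisms over \(b\), into itself, and \(g^{-1}\) does the same. Hence \(g\) restricts to an automorphism \(g_b\co P(b)\to P(b)\) given by \(g(1_b,\alpha)=(1_b,g_b(\alpha))\). We then set \(\theta^g=(g_b)_{b\in\Ba}\). Naturality of \(\theta^g\) is exactly the strictness condition. The identity \(g(f,1_{P(f)(x)})=(f,1_{P(f)(g_b(x))})\) forces the codomains to agree, so \(g_{b'}(P(f)(x))=P(f)(g_b(x))\) on objects. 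For a morphism \(\alpha\co x\to y\) in \(P(b)\), we use the equality in \(\int^{\Ba}P\)
\[
(f,1_{P(f)(y)})\circ(1_b,\alpha)=(f,P(f)(\alpha))=(1_{b'},P(f)(\alpha))\circ(f,1_{P(f)(x)}),
\]
apply \(g\), and compare second components. This gives \(g_{b'}P(f)(\alpha)=P(f)g_b(\alpha)\). Composition is computed fiberwise, so \(g\mapsto\theta^g\) is a homomorphism.

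Conversely, let \(\theta\co P\Rightarrow P\) be a natural isomorphism, so that each \(\theta_b\) is an automorphism of \(P(b)\). We define
\[
g^\theta(b,x)=(b,\theta_b(x)),\qquad g^\theta(f,\alpha)=(f,\theta_{b'}(\alpha)).
\]
This is well typed because \(\alpha\co P(f)(x)\to y\) gives \(\theta_{b'}(\alpha)\co\theta_{b'}P(f)(x)=P(f)\theta_b(x)\to\theta_{b'}(y)\) by naturality. Identities are preserved since \(\theta_{b'}(1)=1\). For composition we need
\[
\theta_{b''}\bigl(\beta\circ P(g)(\alpha)\bigr)=\theta_{b''}(\beta)\circ P(g)\bigl(\theta_{b'}(\alpha)\bigr),
\]
which follows from functoriality of \(\theta_{b''}\) together with the naturality square \(\theta_{b''}P(g)=P(g)\theta_{b'}\) applied to morphisms. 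The functor \(g^\theta\) lies over \(\Ba\) and sends \((f,1)\) to \((f,1)\), so it is strict. It is invertible with inverse \(g^{\theta^{-1}}\).

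It remains to check that the two constructions are inverse. Clearly \(\theta^{g^\theta}=\theta\). The harder direction is \(g^{\theta^g}=g\), since a priori \(g\) might act on a nonvertical morphism \((f,\alpha)\) other than by \(\theta_{b'}(\alpha)\). We use the factorization \((f,\alpha)=(1_{b'},\alpha)\circ(f,1_{P(f)(x)})\), with the second factor a canonical opcartesian lift as in Proposition~\ref{prop:functor_to_Opfibration}. By strictness \(g\) fixes the form of the lift, and by the fiberwise description it sends the vertical factor to \((1_{b'},g_{b'}(\alpha))\). Hence \(g\) is determined by its restrictions to the fibers, and \(g=g^{\theta^g}\).

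The main obstacle is this bookkeeping of domains and codomains in the strictness identity, in particular making sure that strictness is what forces the object-level compatibility \(g_{b'}P(f)=P(f)g_b\). Naturality of the isomorphism in \(P\), meaning compatibility with pre- and postcomposition by natural isomorphisms \(P\cong P'\), follows from the same formulas by transport of structure.
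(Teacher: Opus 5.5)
Your proposal is correct and follows the paper's approach. You restrict a strict gauge automorphism to the fibers to obtain a natural automorphism of \(P\), and conversely send \(\eta\) to \((b,x)\mapsto(b,\eta_b(x))\), \((f,\alpha)\mapsto(f,\eta_{b'}(\alpha))\). Beyond the paper, you supply the verifications it leaves implicit: comparing second components of \(g\) applied to \((f,P(f)(\alpha))\) to get naturality on morphisms, and using the factorization \((f,\alpha)=(1_{b'},\alpha)\circ(f,1_{P(f)(x)})\) to show that a strict gauge automorphism is determined by its fiber restrictions.
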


\begin{proof}
Let \(g\in\operatorname{Gau}_{\mathrm{str}}(P)\). Since \(g\) lies over
\(\id_{\Ba}\), its restriction to the fiber over \(b\) defines an
automorphism
\(
g_b\co P(b)\longrightarrow P(b).
\)
Preservation of the canonical opcartesian lifts implies
\(
P(f)\circ g_b=g_{b'}\circ P(f)
\)
for every \(f\co b\to b'\). Thus, \((g_b)_{b\in\Ba}\) defines a natural
automorphism of \(P\).

Conversely, a natural automorphism
\(\eta\co P\Rightarrow P\) determines an automorphism over \(\Ba\) by
\[
(b,x)\longmapsto(b,\eta_b(x)),
\qquad
(f,\alpha)\longmapsto(f,\eta_{b'}(\alpha)).
\]
Naturality ensures that this is well defined, and the resulting
automorphism preserves the canonical opcartesian lifts. These constructions
are mutually inverse and compatible with composition.
\end{proof}

In contrast, an arbitrary element of \(\operatorname{Gau}(P)\) need not
preserve the canonical opcartesian lifts and therefore need not arise from
a natural automorphism of \(P\).

\subsubsection{Monodromy}

Fix \(b_0\in\Ba\). Since \(P\) sends every morphism to an isomorphism, it
induces a functor
\(
\overline{P}\co\Pi_1(\Ba)\longrightarrow\cat.
\)
After identifying \(P(b_0)\) with \(\F\), restriction to the automorphism
group of \(b_0\) gives the \emph{monodromy representation}
\(
\rho_P\co\pi_1(\Ba,b_0)\longrightarrow\Auto(\F).
\)
Its image
\(
M_P:=\Ima(\rho_P)\leq\Auto(\F)
\)
is the \emph{monodromy subgroup} of \(P\).

Changing the identification \(P(b_0)\cong\F\) conjugates \(\rho_P\).
Consequently, its conjugacy class and the conjugacy class of \(M_P\) depend
only on the isomorphism class of the bundle.

\begin{proposition}\label{prop:gauge-centralizer}
Let \(P\co\Ba\to\cat\) be a fiber bundle with connected base and fiber
\(\F\). Then
\[
\operatorname{Gau}_{\mathrm{str}}(P)
\cong
C_{\Auto(\F)}(M_P).
\]
\end{proposition}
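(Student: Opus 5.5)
The plan is to combine Proposition~\ref{prop:strict-gauge-natural} with the universal property of localization and then use connectedness of \(\Ba\) to reduce to a single object. By Proposition~\ref{prop:strict-gauge-natural}, \(\operatorname{Gau}_{\mathrm{str}}(P)\cong\operatorname{Nat}_{\mathrm{Iso}}(P,P)\). Since \(P\) sends every morphism to an isomorphism, Lemma~\ref{lemma:universal_localization} identifies natural automorphisms of \(P\) on \(\Ba\) with natural automorphisms of the induced functor \(\overline{P}\co\Pi_1(\Ba)\to\cat\). This bijection respects composition, because it is just restriction along \(\ell_{\Ba}\) and does not change components. So it suffices to compute the automorphism group of \(\overline{P}\) in the functor category \([\Pi_1(\Ba),\cat]\).

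Next, I would use the equivalence \(\B\pi_1(\Ba,b_0)\simeq\Pi_1(\Ba)\), given by the inclusion \(j\) of the full subgroupoid on \(b_0\). Restriction along an equivalence of categories is an equivalence of functor categories, hence fully faithful. Therefore \(\eta\mapsto\eta_{b_0}\) gives a group isomorphism
\[
\operatorname{Nat}_{\mathrm{Iso}}(\overline{P},\overline{P})\cong\operatorname{Nat}_{\mathrm{Iso}}(\overline{P}j,\overline{P}j).
\]
I would also check this bijection directly, since everything is concrete. Injectivity follows from connectedness: for every \(b\), pick \(\gamma\co b_0\to b\) in \(\Pi_1(\Ba)\). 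Naturality forces
\[
\eta_b=\overline{P}(\gamma)\,\eta_{b_0}\,\overline{P}(\gamma)^{-1}.
\]
Surjectivity goes as follows. Given an automorphism \(\phi\) of \(P(b_0)\) commuting with all \(\overline{P}(\lambda)\) for \(\lambda\in\pi_1(\Ba,b_0)\), define \(\eta_b\) by the displayed formula, using the spanning-tree paths \(\gamma_b\) from Theorem~\ref{teo:spanning_tree}. Naturality along any \(f\co b\to b'\) reduces to \(\phi\) commuting with the loop \(\gamma_{b'}^{-1}f\gamma_b\), which holds by hypothesis.

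Finally, a natural automorphism of \(\overline{P}j\), i.e.\ of the functor \(\B\pi_1(\Ba,b_0)\to\cat\), is exactly an automorphism \(\phi\) of the single category \(P(b_0)\) with \(\phi\,\overline{P}(\lambda)=\overline{P}(\lambda)\,\phi\) for all loops \(\lambda\). After identifying \(P(b_0)\cong\F\) via some \(\psi\), conjugation by \(\psi\) identifies this set with the elements of \(\Auto(\F)\) commuting with every \(\rho_P(\lambda)\). This is precisely \(C_{\Auto(\F)}(M_P)\). The isomorphism is compatible with composition because all maps involved are given by taking components and conjugating.

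The main obstacle is the well-definedness step in surjectivity. The formula for \(\eta_b\) must not depend on the chosen path, and naturality must hold for arbitrary morphisms, not only for tree edges. Both points reduce to the centralizing hypothesis applied to the loops \(\gamma_{b'}^{-1}f\gamma_b\), or to \(\gamma'^{-1}\gamma\) for two paths \(\gamma,\gamma'\) from \(b_0\) to \(b\).
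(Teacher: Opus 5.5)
Your proposal is correct and follows essentially the same route as the paper. Like the paper, you reduce via Proposition~\ref{prop:strict-gauge-natural} and Lemma~\ref{lemma:universal_localization} to natural automorphisms of \(\overline{P}\) on \(\Pi_1(\Ba)\), then use connectedness to identify each with its component at \(b_0\), which must commute with the monodromy. The only difference is that you verify the extension step explicitly: you define \(\eta_b\) by conjugating along tree paths and check naturality against the loops \(\gamma_{b'}^{-1}f\gamma_b\), whereas the paper only asserts that each element of the centralizer extends uniquely.
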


\begin{proof}
By Proposition~\ref{prop:strict-gauge-natural} and
Lemma~\ref{lemma:universal_localization},
\[
\operatorname{Gau}_{\mathrm{str}}(P)
\cong
\operatorname{Nat}_{\mathrm{Iso}}(\overline{P},\overline{P}).
\]
Since \(\Pi_1(\Ba)\) is connected, a natural automorphism of
\(\overline{P}\) is determined by its component
\(u\in\Auto(\F)\) at \(b_0\). Naturality with respect to every loop
\(\gamma\in\pi_1(\Ba,b_0)\) is precisely the condition
\[
u\rho_P(\gamma)=\rho_P(\gamma)u.
\]
Thus \(u\) belongs to \(C_{\Auto(\F)}(M_P)\), and every element of this
centralizer extends uniquely to a natural automorphism of
\(\overline{P}\).
\end{proof}

\begin{remark}
If \(\theta\co P\Rightarrow Q\) is a natural isomorphism, conjugation
induces the group isomorphism
\[
\operatorname{Nat}_{\mathrm{Iso}}(P,P)
\longrightarrow
\operatorname{Nat}_{\mathrm{Iso}}(Q,Q),
\qquad
\eta\longmapsto\theta\circ\eta\circ\theta^{-1}.
\]
Therefore the strict gauge group is unchanged by the normalizations of
Section~\ref{sec:trivia}.
\end{remark}

\begin{example}
For the bundle \(K\co S\to\cat\), the monodromy representation is the
surjection
\[
\mathbb{Z}\cong\pi_1(S,x)
\longrightarrow
\Auto(S)\cong\mathbb{Z}_2
\]
sending a generator to the automorphism that exchanges \(f\) and \(g\).
Since \(\mathbb{Z}_2\) is abelian,
\[
\operatorname{Gau}_{\mathrm{str}}(K)
\cong
C_{\mathbb{Z}_2}(\mathbb{Z}_2)
\cong
\mathbb{Z}_2.
\]
\end{example}

\begin{example}
For the bundle of Example~\ref{Ex:Circle_Twist_S_3}, the monodromy subgroup
is generated by \((12)\) and \((23)\), and hence equals \(S_3\). Since the
center of \(S_3\) is trivial,
\[
\operatorname{Gau}_{\mathrm{str}}(P)
\cong
C_{S_3}(S_3)
 =
1.
\]
\end{example}

\subsection{Classification of fiber bundles}\label{sec:classification}

Fix a connected category \(\Ba\), a category \(\F\), and let
\(
G=\Auto(\F).
\)
We regard \(G\) as a one-object groupoid \(\B G\).

By Proposition~\ref{prop:constant-fiber-reduction}, every bundle with fiber
\(\F\) is isomorphic to a functor \(P\co\Ba\to\B G\), where
\(G=\Auto(\F)\). Equivalently, it is a map
\(\varphi_P\co\operatorname{Mor}(\Ba)\to G\) satisfying
\(\varphi_P(1_b)=1\) and
\(\varphi_P(gf)=\varphi_P(g)\varphi_P(f)\). Two such cocycles are
cohomologous if
\(\psi(f)=h(b')\varphi(f)h(b)^{-1}\) for some
\(h\co\operatorname{Ob}(\Ba)\to G\). We write
\(H^1(\Ba;G)\) for the resulting pointed set.

\begin{theorem}\label{thm:classification-H1}
There are natural bijections
\[
\left\{
\begin{array}{c}
\text{isomorphism classes of fiber bundles}\\
\text{over \(\Ba\) with fiber \(\F\)}
\end{array}
\right\}
\cong
H^1(\Ba;G)
\cong
\operatorname{Hom}\bigl(\pi_1(\Ba,b_0),G\bigr)/G,
\]
where \(G\) acts on the set of homomorphisms by conjugation.
\end{theorem}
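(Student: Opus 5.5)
The plan is to establish the two bijections separately and then compose them.

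For the first bijection, send a fiber bundle \(P\) with fiber \(\F\) to the cocycle of its constant-fiber model. Proposition~\ref{prop:constant-fiber-reduction} replaces \(P\) by an isomorphic \(P^{\mathrm c}\) with \(P^{\mathrm c}(b)=\F\). Then \(\varphi(f)=P^{\mathrm c}(f)\in G\) is a cocycle, since functoriality of \(P^{\mathrm c}\) gives exactly \(\varphi(1_b)=1\) and \(\varphi(gf)=\varphi(g)\varphi(f)\).

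For well-definedness, I would check the following. A natural isomorphism \(\theta\co P_1^{\mathrm c}\Rightarrow P_2^{\mathrm c}\) between constant-fiber bundles is a family \(h(b)=\theta_b\in G\). Its naturality square reads \(\varphi_2(f)h(b)=h(b')\varphi_1(f)\), which is the cohomology relation. Conversely, any such \(h\) defines a natural isomorphism. The different choices of the \(\psi_b\) in Proposition~\ref{prop:constant-fiber-reduction} differ by such an \(h\), so the class is independent of them. Thus isomorphism classes of bundles correspond to isomorphism classes of functors \(\Ba\to\B G\), i.e.\ to \(H^1(\Ba;G)\). Surjectivity is immediate, since any cocycle defines a functor \(\Ba\to\B G\subset\cat\) with all transition functors invertible.

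For the second bijection, I would use the localization. A functor \(\Ba\to\B G\) inverts every morphism, so it factors uniquely through \(\ell_{\Ba}\) as \(\overline{P}\co\Pi_1(\Ba)\to\B G\). By Lemma~\ref{lemma:universal_localization}, natural transformations (here necessarily isomorphisms, since \(\B G\) is a groupoid) between such functors coincide on \(\Ba\) and on \(\Pi_1(\Ba)\). Hence \(H^1(\Ba;G)\) is the set of isomorphism classes of functors \(\Pi_1(\Ba)\to\B G\).

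Next, restrict along the equivalence \(\B\pi_1(\Ba,b_0)\simeq\Pi_1(\Ba)\). Restriction along an equivalence induces an equivalence of functor categories \([\Pi_1(\Ba),\B G]\simeq[\B\pi_1(\Ba,b_0),\B G]\), and hence a bijection on isomorphism classes. On the target, a functor \(\B\pi_1\to\B G\) is a homomorphism \(\rho\). A natural isomorphism \(\rho\Rightarrow\rho'\) is a single \(u\in G\) with \(\rho'(\gamma)=u\rho(\gamma)u^{-1}\), so isomorphism classes are \(\operatorname{Hom}(\pi_1(\Ba,b_0),G)/G\). The composite sends \(P\) to the conjugacy class of \(\rho_P\).

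I expect the main obstacle to be making the equivalence step concrete and checking naturality. To make it explicit, I would use Theorem~\ref{teo:spanning_tree}. Given \(\rho\), build a cocycle trivial on a spanning tree \(X\) by setting \(\varphi(f)=\rho(\gamma_{b'}^{-1}f\gamma_b)\). Then verify that two tree-normalized cocycles are cohomologous exactly when the corresponding representations are conjugate. The point is that \(h\) must be constant along \(X\), hence equal to a single \(u\), by connectedness of \(X\). Naturality in \(\F\) (or in \(G\)) amounts to observing that every construction is compatible with postcomposition by homomorphisms \(G\to G'\).
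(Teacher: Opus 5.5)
Your proposal is correct and follows the paper's proof. For the first bijection you use the constant-fiber models, in which natural isomorphisms become coboundaries; for the second you use the unique factorization through \(\Pi_1(\Ba)\) together with the equivalence \(\B\pi_1(\Ba,b_0)\simeq\Pi_1(\Ba)\), which identifies isomorphism classes with conjugacy classes of homomorphisms. You additionally supply the well-definedness checks that the paper leaves implicit, and your spanning-tree description matches the remark the paper makes right after the theorem.
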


\begin{proof}
The first bijection follows from the preceding description of bundles as
\(G\)-valued cocycles and bundle isomorphisms as coboundary transformations.

For the second, every functor
\(
P\co\Ba\longrightarrow\B G
\)
factors uniquely through \(\Pi_1(\Ba)\). Since \(\Ba\) is connected,
choosing \(b_0\) identifies functors
\(\Pi_1(\Ba)\to\B G\), up to natural isomorphism, with homomorphisms
\(
\rho\co\pi_1(\Ba,b_0)\longrightarrow G
\)
up to conjugation in \(G\). Under this correspondence, \(P\) is sent to
its monodromy representation \(\rho_P\).
\end{proof}

Equivalently, after choosing a spanning tree \(X\), every bundle is
isomorphic to one that is trivial on \(X\), and its remaining transition
automorphisms determine the homomorphism
\(
\rho_P\co\pi_1(\Ba,b_0)\longrightarrow G.
\)

\begin{remark}
If \(G\) is abelian, conjugation is trivial and therefore
\[
H^1(\Ba;G)
\cong
\operatorname{Hom}\bigl(\pi_1(\Ba,b_0),G\bigr).
\]
This agrees with the first cohomology group of the nerve of \(\Ba\) with
constant coefficients in \(G\); see
\cite{WebbCohomology}.
\end{remark}

\subsection{Sections and fixed points}\label{sec:sections}

Let \(P\co\Ba\to\cat\) be a fiber bundle with connected base and fiber
\(\F\). Fix \(b_0\in\Ba\), identify \(P(b_0)\) with \(\F\), and let
\(\rho_P\co\pi_1(\Ba,b_0)\to\Auto(\F)\) be the monodromy
representation. We denote by \(\mathbf{1}\co\Ba\to\cat\) the constant
functor with value the terminal category.

A \emph{horizontal section} of
\(\pi_P\co\int^{\Ba}P\to\Ba\) is a section that preserves the canonical
opcartesian lifts. Thus, if \(s(b)=(b,x_b)\), then
\(P(f)(x_b)=x_{b'}\) and \(s(f)=(f,1_{x_{b'}})\) for every
\(f\co b\to b'\). An object \(x\in\F\) is a \emph{strict fixed point}
of the monodromy if \(\rho_P(\gamma)(x)=x\) for every
\(\gamma\in\pi_1(\Ba,b_0)\).

\begin{proposition}\label{prop:sections-fixed-points}
There are natural bijections between the following sets:
\begin{enumerate}
\item strict fixed points of the monodromy;
\item horizontal sections of \(\pi_P\);
\item natural transformations \(\mathbf{1}\Rightarrow P\).
\end{enumerate}
Moreover, arbitrary sections of \(\pi_P\) are naturally identified with
lax natural transformations \(\mathbf{1}\Rightarrow P\).
\end{proposition}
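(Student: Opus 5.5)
I will prove the three bijections by passing through (3) as the central description, and then treat arbitrary sections separately.

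For (2)$\leftrightarrow$(3): a horizontal section $s(b)=(b,x_b)$ satisfies $P(f)(x_b)=x_{b'}$ for every $f\co b\to b'$. A natural transformation $\eta\co\mathbf{1}\Rightarrow P$ is exactly a family of objects $\eta_b\in P(b)$, since a functor from the terminal category picks out an object. Naturality of $\eta$ is the equation $P(f)(\eta_b)=\eta_{b'}$. So the two notions are literally the same data. The only point to check is that the assignment $s(f)=(f,1_{x_{b'}})$ is a functor. Identities are preserved because $P(1_b)=\id$. Composites are preserved because $(g,1)\circ(f,1)=(gf,1\circ P(g)(1))=(gf,1)$.

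For (3)$\leftrightarrow$(1): by Lemma~\ref{lemma:universal_localization}, applied with codomain $\cat$, natural transformations $\mathbf{1}\Rightarrow P$ correspond to natural transformations $\mathbf{1}\Rightarrow\overline P$ on $\Pi_1(\Ba)$. This works because $\mathbf{1}$ and $P$ both invert all morphisms. Since $\Pi_1(\Ba)$ is connected, such a transformation is determined by its component $x=\eta_{b_0}\in\F$, via $\eta_b=\overline P(\gamma)(x)$ for any $\gamma\co b_0\to b$. This mirrors the argument in Proposition~\ref{prop:gauge-centralizer}. The formula is independent of the choice of $\gamma$ exactly when $\rho_P(\gamma')(x)=x$ for all loops $\gamma'$, that is, when $x$ is a strict fixed point. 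Conversely, naturality at loops at $b_0$ forces this fixed-point condition. Naturality in the identification $P(b_0)\cong\F$ is routine.

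For the lax statement, an arbitrary section $s$ with $\pi_P s=\id$ has the form $s(b)=(b,x_b)$ and $s(f)=(f,\alpha_f)$, with $\alpha_f\co P(f)(x_b)\to x_{b'}$. I will check the following correspondences.
\begin{enumerate}
\item Functoriality of $s$ on identities gives $\alpha_{1_b}=1_{x_b}$.
\item Functoriality on composites gives $\alpha_{gf}=\alpha_g\circ P(g)(\alpha_f)$.
\end{enumerate}
These are precisely the unit and composition axioms of a lax natural transformation $\mathbf{1}\Rightarrow P$ whose components are the objects $x_b$ and whose 2-cells are $\alpha_f$. I will fix the orientation convention for laxness so that it matches $P(f)x_b\to x_{b'}$. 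The main obstacle is this bookkeeping of the lax convention and coherence axioms. Under that convention, the horizontal sections are exactly the lax transformations whose 2-cells are all identities, which recovers the strict case.
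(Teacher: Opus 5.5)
Your proposal is correct and follows essentially the same route as the paper. You identify horizontal sections with natural transformations \(\mathbf{1}\Rightarrow P\), transport a fixed point along paths in \(\Pi_1(\Ba)\) with path-independence coming from loops at \(b_0\), and read the functoriality of an arbitrary section as the unit and composition axioms of a lax transformation. The differences are minor. You obtain the converse direction (that \(x_{b_0}\) is fixed) explicitly from Lemma~\ref{lemma:universal_localization}, where the paper only asserts it, and you verify the functoriality of horizontal sections, which the paper takes for granted.
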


\begin{proof}
A natural transformation \(\eta\co\mathbf{1}\Rightarrow P\) assigns an
object \(x_b=\eta_b(\ast)\in P(b)\) to each \(b\in\Ba\), subject to
\(P(f)(x_b)=x_{b'}\) for every \(f\co b\to b'\). This is precisely the
data of a horizontal section, defined by \(s(b)=(b,x_b)\) and
\(s(f)=(f,1_{x_{b'}})\).

Let \(\overline P\co\Pi_1(\Ba)\to\cat\) be the extension of \(P\) to
the fundamental groupoid. If \(x\in\F\) is fixed by the monodromy, choose
for every \(b\in\Ba\) a morphism \(\gamma_b\co b_0\to b\) in
\(\Pi_1(\Ba)\) and set \(x_b=\overline P(\gamma_b)(x)\). This is
independent of the choice of \(\gamma_b\), since two such choices differ
by a loop at \(b_0\), and it satisfies \(P(f)(x_b)=x_{b'}\). Conversely,
a horizontal section determines the fixed point \(x_{b_0}\).

Finally, an arbitrary section has the form \(s(b)=(b,x_b)\) and
\(s(f)=(f,\alpha_f)\), where
\(\alpha_f\co P(f)(x_b)\to x_{b'}\), with
\(\alpha_{1_b}=1_{x_b}\) and
\(\alpha_{gf}=\alpha_g\circ P(g)(\alpha_f)\). These are exactly the data
and axioms of a lax natural transformation
\(\mathbf{1}\Rightarrow P\).
\end{proof}

Thus every horizontal section is a section, but the converse need not hold:
a fiber bundle may admit a section even when its monodromy has no strict
fixed points.

\begin{corollary}\label{cor:covering-sections}
Let \(P\co\Ba\to\Sets\) be a fiber bundle with discrete fiber \(F\).
Then \(\pi_P\) admits a section if and only if the monodromy action on
\(F\) has a fixed point. In this case every section is horizontal.
\end{corollary}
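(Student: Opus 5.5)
The plan is to reduce the corollary to Proposition~\ref{prop:sections-fixed-points} by showing that, when the fiber is discrete, every section of \(\pi_P\) is automatically horizontal. The bijection between strict fixed points of the monodromy and horizontal sections then gives both assertions at once. Here \(P\) takes values in \(\Sets\subset\cat\), regarded as discrete categories, so \(\int^{\Ba}P\) is the covering of Proposition~\ref{prop:coverings-bundles}.

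For the key step, let \(s\) be an arbitrary section. By the last part of Proposition~\ref{prop:sections-fixed-points}, \(s\) has the form \(s(b)=(b,x_b)\) and \(s(f)=(f,\alpha_f)\), where \(\alpha_f\co P(f)(x_b)\to x_{b'}\) is a morphism in \(P(b')\). Since \(P(b')\) is discrete, its only morphisms are identities. Hence \(P(f)(x_b)=x_{b'}\) and \(\alpha_f=1_{x_{b'}}\) for every \(f\co b\to b'\), which is exactly the definition of a horizontal section. Equivalently, a lax natural transformation \(\mathbf{1}\Rightarrow P\) into a discrete-valued functor is strict.

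Given this, the remaining argument is short. If \(\pi_P\) has a section, that section is horizontal, so Proposition~\ref{prop:sections-fixed-points} identifies it with a strict fixed point \(x_{b_0}\) of \(\rho_P\). For the converse, a fixed point \(x\in F\) produces a horizontal section through the transport \(x_b=\overline P(\gamma_b)(x)\). The final sentence of the statement is the key step itself.

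There is no serious obstacle. The only point needing care is the identification of the fiber \(P(b')\) of the Grothendieck construction with the discrete category on the set \(P(b')\). This ensures that vertical components are forced to be identities, and it is consistent with the description of coverings in Proposition~\ref{prop:coverings-bundles}.
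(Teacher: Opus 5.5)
Your proof is correct and follows the paper's argument. Discreteness of the fibers forces each component \(\alpha_f\co P(f)(x_b)\to x_{b'}\) of an arbitrary section to be an identity, so every section is horizontal, and Proposition~\ref{prop:sections-fixed-points} then identifies sections with strict fixed points of the monodromy.
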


\begin{proof}
Since \(F\) is discrete, a morphism
\(\alpha_f\co P(f)(x_b)\to x_{b'}\) exists if and only if
\(P(f)(x_b)=x_{b'}\), in which case it is an identity. Hence every
section is horizontal, and the result follows from
Proposition~\ref{prop:sections-fixed-points}.
\end{proof}

For the covering of Example~\ref{ex:double_cover}, the monodromy exchanges
the two elements of the fiber. It has no fixed point, so the covering has no
section.

\section{Fundamental-groupoid reductions and classification}
\label{sec:classification-reductions}

The classification of fiber bundles over a connected category \(\Ba\) with
fixed fiber \(\F\) depends on the fundamental group of the base. We therefore
introduce an object-removal procedure that preserves the fundamental
groupoid. This reduction is generally weaker than strong homotopy reduction
and is designed to preserve the classification of fiber bundles rather than
the full strong homotopy type.

We begin with a categorical version of the one-dimensional part of
Quillen's Theorem~A \cite{Quillen1973}.

\begin{theorem}[Fundamental-groupoid version of Quillen's Theorem~A]
\label{thm:quillen-pi1}
Let \(F\co\CC\to\D\) be a functor. Suppose that
\(\Pi_1(d\downarrow F)\) is equivalent to the terminal groupoid for every
object \(d\in\D\). Then the induced functor
\(\Pi_1(F)\co\Pi_1(\CC)\to\Pi_1(\D)\) is an equivalence of groupoids.
Consequently, for every \(c\in\CC\), the induced homomorphism
\(\pi_1(\CC,c)\to\pi_1(\D,F(c))\) is an isomorphism.
\end{theorem}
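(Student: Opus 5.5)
We prove the statement by constructing a quasi-inverse \(G\co\Pi_1(\D)\to\Pi_1(\CC)\) directly from the contractibility data. The plan works entirely with functors into groupoids: by Lemma~\ref{lemma:universal_localization}, natural transformations between morphism-inverting functors may be checked on \(\CC\) or \(\D\) themselves. The hypothesis says in particular that each \(d\downarrow F\) is nonempty and connected, and that any two parallel morphisms of \(\Pi_1(d\downarrow F)\) coincide.

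First we define \(G\) on objects by choosing, for each \(d\in\D\), an object \((c_d,u_d\co d\to F(c_d))\) of \(d\downarrow F\), and set \(G(d)=c_d\). For a morphism \(g\co d\to d'\) of \(\D\), the pair \((c_{d'},u_{d'}\circ g)\) is an object of \(d\downarrow F\). By connectedness there is a zigzag in \(d\downarrow F\) from \((c_d,u_d)\) to \((c_{d'},u_{d'}g)\). Projecting it along the forgetful functor \(Q_d\co d\downarrow F\to\CC\) and applying \(\ell_{\CC}\) gives a morphism \(G(g)\co c_d\to c_{d'}\) in \(\Pi_1(\CC)\). Because \(\Pi_1(d\downarrow F)\) is equivalent to the terminal groupoid, the class of the zigzag in \(\Pi_1(d\downarrow F)\) is unique, so \(G(g)=\Pi_1(Q_d)(\text{unique arrow})\) is well defined.

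Functoriality is checked as follows. For \(d\xrightarrow{g}d'\xrightarrow{g'}d''\), precomposition with \(g\) defines a functor \(g^*\co d'\downarrow F\to d\downarrow F\) commuting with the projections to \(\CC\). The image under \(g^*\) of the unique arrow \((c_{d'},u_{d'})\to(c_{d''},u_{d''}g')\) in \(\Pi_1(d'\downarrow F)\) is an arrow \((c_{d'},u_{d'}g)\to(c_{d''},u_{d''}g'g)\) in \(\Pi_1(d\downarrow F)\). Composing it with the arrow defining \(G(g)\) yields the unique arrow defining \(G(g'g)\). Identities are handled the same way. Since \(G\) inverts everything, it extends to \(\Pi_1(\D)\).

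Next we construct the two natural isomorphisms. For \(\Pi_1(F)\circ G\cong\id\), the components are \(\ell_{\D}(u_d)^{-1}\co F(c_d)\to d\). Naturality in \(g\) follows by applying \(F\) to the zigzag defining \(G(g)\): each morphism \(\varphi\) in the zigzag satisfies \(F(\varphi)\circ(\text{structure map})=(\text{structure map})\), so in \(\Pi_1(\D)\) the image \(F(G(g))\) equals \(u_{d'}\,g\,u_d^{-1}\). For \(G\circ\Pi_1(F)\cong\id\), take \(d=F(c)\). The object \((c,1_{F(c)})\) lies in \(F(c)\downarrow F\), so the unique arrow \((c_{F(c)},u_{F(c)})\to(c,1)\) in \(\Pi_1(F(c)\downarrow F)\), projected to \(\Pi_1(\CC)\), gives a component \(\theta_c\co G F(c)\to c\). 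Naturality for \(h\co c\to c'\) is again a uniqueness statement in \(\Pi_1(F(c)\downarrow F)\): both composites are projections of arrows \((c_{F(c)},u_{F(c)})\to(c',F(h))\), which must coincide. By Lemma~\ref{lemma:universal_localization} it suffices to check naturality on morphisms of \(\CC\) and \(\D\). The statement about \(\pi_1\) then follows, since an equivalence induces isomorphisms on automorphism groups.

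The main obstacle is the naturality bookkeeping: one must make sure that every equality of zigzags used is an equality in some single \(\Pi_1(d\downarrow F)\), where uniqueness applies, and transport it correctly via the functors \(g^*\) and \(Q_d\). In particular, the naturality of \(\theta\) requires comparing arrows in \(F(c)\downarrow F\) and \(F(c')\downarrow F\) through \(F(h)^*\). We would organize this by first proving a single lemma: for any two objects \(x,y\) of \(d\downarrow F\), the projection of the unique arrow \(x\to y\) is compatible with \(g^*\) and with composition. Everything above then reduces to that lemma.
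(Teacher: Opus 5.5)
Your proposal is correct and follows essentially the same route as the paper. It uses the same choice of objects $(c_d,u_d)\in d\downarrow F$, the same definition of $G$ by projecting the unique arrow of $\Pi_1(d\downarrow F)$, functoriality via the precomposition functors $g^*$, and naturality of the second isomorphism via uniqueness in $\Pi_1(F(c)\downarrow F)$ after transport along $F(h)^*$; your components $\ell_{\D}(u_d)^{-1}$ and $\theta_c$ are just the inverses of the paper's $\alpha_d$ and $\beta_c$, and your zigzag argument for $\Pi_1(F)(G(g))\circ u_d=u_{d'}\circ g$ spells out a step the paper asserts directly from the definition of $\xi_u$.
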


\begin{proof}
For every \(d\in\D\), choose an object
\((c_d,\alpha_d)\in d\downarrow F\), where
\(\alpha_d\co d\to F(c_d)\). Let
\(q_d\co d\downarrow F\to\CC\) denote the canonical projection.

We first construct a functor \(G\co\D\to\Pi_1(\CC)\). On objects, set
\(G(d)=c_d\). Let \(u\co d\to d'\) be a morphism of \(\D\). The pairs
\((c_d,\alpha_d)\) and
\((c_{d'},\alpha_{d'}\circ u)\) are objects of \(d\downarrow F\).
Since \(\Pi_1(d\downarrow F)\) is equivalent to the terminal groupoid,
there is a unique morphism
\(\xi_u\co(c_d,\alpha_d)\to(c_{d'},\alpha_{d'}\circ u)\) in
\(\Pi_1(d\downarrow F)\). Define
\(G(u)=\Pi_1(q_d)(\xi_u)\).

For \(u=1_d\), uniqueness gives
\(\xi_{1_d}=1_{(c_d,\alpha_d)}\), and hence \(G(1_d)=1_{c_d}\).

Let \(u\co d\to d'\) and \(v\co d'\to d''\) be composable.
Precomposition with \(u\) defines a functor
\(u^*\co d'\downarrow F\to d\downarrow F\), given by
\(u^*(c,\alpha)=(c,\alpha\circ u)\). The morphisms
\(u^*(\xi_v)\circ\xi_u\) and \(\xi_{vu}\) have the same source and target
in \(\Pi_1(d\downarrow F)\), namely \((c_d,\alpha_d)\) and
\((c_{d''},\alpha_{d''}\circ v\circ u)\). By uniqueness,
\(u^*(\xi_v)\circ\xi_u=\xi_{vu}\). Applying \(\Pi_1(q_d)\), and using
\(q_d\circ u^*=q_{d'}\), gives \(G(vu)=G(v)G(u)\). Thus \(G\) is a
functor.

Since \(\Pi_1(\CC)\) is a groupoid, \(G\) factors uniquely through the
localization of \(\D\), giving a functor
\(\overline G\co\Pi_1(\D)\to\Pi_1(\CC)\).

By the definition of \(\xi_u\), the equality
\(\Pi_1(F)(G(u))\circ\alpha_d=\alpha_{d'}\circ u\) holds in
\(\Pi_1(\D)\). Hence the family \((\alpha_d)_{d\in\D}\) defines a
natural isomorphism
\(\alpha\co\id_{\Pi_1(\D)}\Rightarrow\Pi_1(F)\circ\overline G\).

It remains to construct a natural isomorphism in the other direction. For
every \(c\in\CC\), the pairs \((c,1_{F(c)})\) and
\((c_{F(c)},\alpha_{F(c)})\) are objects of \(F(c)\downarrow F\).
Let
\(\zeta_c\co(c,1_{F(c)})\to(c_{F(c)},\alpha_{F(c)})\)
be the unique morphism between them in
\(\Pi_1(F(c)\downarrow F)\), and define
\(\beta_c=\Pi_1(q_{F(c)})(\zeta_c)\). Thus
\(\beta_c\co c\to\overline G(F(c))\) is an isomorphism in
\(\Pi_1(\CC)\).

Let \(h\co c\to c'\) be a morphism of \(\CC\). It determines a morphism
\[
\widehat h\co
(c,1_{F(c)})
\longrightarrow
(c',F(h))
\]
in \(F(c)\downarrow F\). Precomposition with \(F(h)\) defines a functor
\(F(h)^*\co F(c')\downarrow F\to F(c)\downarrow F\). In
\(\Pi_1(F(c)\downarrow F)\), the morphisms
\(F(h)^*(\zeta_{c'})\circ\widehat h\) and
\(\xi_{F(h)}\circ\zeta_c\) have the same source and target, namely
\((c,1_{F(c)})\) and
\((c_{F(c')},\alpha_{F(c')}\circ F(h))\). By uniqueness, they coincide.

Applying the projection to \(\Pi_1(\CC)\) gives
\(\beta_{c'}\circ h
=\overline G(\Pi_1(F)(h))\circ\beta_c\).
Therefore \((\beta_c)_{c\in\CC}\) defines a natural isomorphism
\(\beta\co\id_{\Pi_1(\CC)}
\Rightarrow\overline G\circ\Pi_1(F)\).

Thus \(\overline G\) is a quasi-inverse of \(\Pi_1(F)\), and
\(\Pi_1(F)\) is an equivalence of groupoids.
\end{proof}

The dual statement holds if \(\Pi_1(F\downarrow d)\) is equivalent to the
terminal groupoid for every object \(d\in\D\).

\subsection{Negligible objects}

\begin{definition}
Let \(\A\) be a finite acyclic category, let \(x\in\A\), and set
\(\A_x=\A\setminus\{x\}\), with inclusion
\(\inc_x\co\A_x\to\A\).

The object \(x\) is \emph{upper fundamental-groupoid negligible} if
\(\Pi_1(x\downarrow\inc_x)\) is equivalent to the terminal groupoid. It
is \emph{lower fundamental-groupoid negligible} if
\(\Pi_1(\inc_x\downarrow x)\) is equivalent to the terminal groupoid.
We call \(x\) \emph{fundamental-groupoid negligible} if it satisfies
either condition.
\end{definition}

Since \(\inc_x\) is the inclusion of a full subcategory, there are
canonical identifications
\(x\downarrow\inc_x\cong\widehat{x\downarrow\A}\) and
\(\inc_x\downarrow x\cong\widehat{\A\downarrow x}\). Thus the definition
can be checked directly from the punctured comma categories used to define
beat objects.

\begin{theorem}[Negligible-object removal]
\label{thm:pi1-negligible-removal}
If \(x\) is a fundamental-groupoid negligible object of a finite acyclic
category \(\A\), then
\(\Pi_1(\inc_x)\co\Pi_1(\A_x)\to\Pi_1(\A)\) is an equivalence of
groupoids.
\end{theorem}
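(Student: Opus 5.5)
The plan is to deduce the statement directly from Theorem~\ref{thm:quillen-pi1} applied to the inclusion \(F=\inc_x\co\A_x\to\A\). In the upper case I use the theorem as stated, with comma categories \(d\downarrow\inc_x\). In the lower case I use its dual, with \(\inc_x\downarrow d\). So it suffices to check that \(\Pi_1(d\downarrow\inc_x)\) is equivalent to the terminal groupoid for every \(d\in\A\), or dually that \(\Pi_1(\inc_x\downarrow d)\) is. I would split into the two cases \(d=x\) and \(d\neq x\).

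For \(d=x\), there is the canonical identification \(x\downarrow\inc_x\cong\widehat{x\downarrow\A}\) noted after the definition. Hence the hypothesis that \(x\) is upper fundamental-groupoid negligible is literally the required condition at \(d=x\). The lower case is handled the same way through \(\inc_x\downarrow x\cong\widehat{\A\downarrow x}\).

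For \(d\neq x\), the object \(d\) lies in \(\A_x\). I claim that \((d,1_d)\) is an initial object of \(d\downarrow\inc_x\). Indeed, a morphism \((d,1_d)\to(c,\alpha)\) is a morphism \(h\co d\to c\) in \(\A_x\) with \(h=\alpha\). It exists and is unique because \(\A_x\) is a full subcategory, so \(\alpha\co d\to c\) already lies in \(\A_x\). Dually, \((d,1_d)\) is terminal in \(\inc_x\downarrow d\).

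It then remains to prove a small auxiliary fact: if a category \(\CC\) has an initial (or terminal) object \(c_0\), then \(\Pi_1(\CC)\) is equivalent to the terminal groupoid. I would argue as follows. The constant functor \(K\co\CC\to\CC\) at \(c_0\) and the unique morphisms \(c_0\to c\) form a natural transformation \(K\Rightarrow\id_{\CC}\). Composing with \(\ell_{\CC}\), both functors invert all morphisms. By Lemma~\ref{lemma:universal_localization}, this gives a natural transformation between the induced endofunctors of \(\Pi_1(\CC)\). It is automatically a natural isomorphism, because \(\Pi_1(\CC)\) is a groupoid. Hence \(\id_{\Pi_1(\CC)}\) is isomorphic to a functor that factors through the terminal groupoid. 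Since \(\CC\) is nonempty, \(\Pi_1(\CC)\) is equivalent to the terminal groupoid: between any two objects there is exactly one morphism. The terminal case is dual.

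With both cases verified, Theorem~\ref{thm:quillen-pi1} (or its dual) yields that \(\Pi_1(\inc_x)\) is an equivalence. The only point I expect to need care is the dual form of Theorem~\ref{thm:quillen-pi1}, which the paper states without proof. I would obtain it by applying the theorem to \(F^{\op}\co\CC^{\op}\to\D^{\op}\). This uses the identifications \((F\downarrow d)^{\op}\cong d\downarrow F^{\op}\) and \(\Pi_1(\CC^{\op})\cong\Pi_1(\CC)^{\op}\), together with the facts that a groupoid is equivalent to the terminal groupoid if and only if its opposite is, and that a functor is an equivalence if and only if its opposite is.
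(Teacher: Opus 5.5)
Your proposal is correct and follows the paper's proof. You apply Theorem~\ref{thm:quillen-pi1}, or its dual, to \(\inc_x\), using the hypothesis at \(d=x\) and the initial object \((d,1_d)\) of \(d\downarrow\inc_x\) (terminal in \(\inc_x\downarrow d\)) for \(d\neq x\). The extra details you supply are sound and are exactly the steps the paper leaves implicit: the localization argument that an initial or terminal object forces a terminal fundamental groupoid, and the derivation of the dual theorem via \(F^{\op}\).
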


\begin{proof}
Suppose first that \(x\) is upper fundamental-groupoid negligible. For
every \(a\in\A_x\), the comma category \(a\downarrow\inc_x\) has the
initial object \((a,1_a)\), and hence its fundamental groupoid is terminal.
By hypothesis, the same is true for \(x\downarrow\inc_x\). The result
follows from Theorem~\ref{thm:quillen-pi1}.

If \(x\) is lower fundamental-groupoid negligible, apply the dual version
of Theorem~\ref{thm:quillen-pi1} to the comma categories
\(\inc_x\downarrow a\).
\end{proof}

\begin{proposition}
Every beat object is fundamental-groupoid negligible.
\end{proposition}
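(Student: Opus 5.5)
The plan is to use the identifications recorded just before Theorem~\ref{thm:pi1-negligible-removal},
\[
x\downarrow\inc_x\cong\widehat{x\downarrow\A},
\qquad
\inc_x\downarrow x\cong\widehat{\A\downarrow x},
\]
and to show that a category with an initial or terminal object has fundamental groupoid equivalent to the terminal groupoid. We split into the two cases of the definition of beat object. If \(x\) is a down beat object, then \(\widehat{\A\downarrow x}\) has a terminal object \(u\co y\to x\). By the identification above, it suffices to prove that \(\Pi_1(\inc_x\downarrow x)\) is equivalent to the terminal groupoid; this shows that \(x\) is lower fundamental-groupoid negligible. Dually, if \(x\) is an up beat object, then \(\widehat{x\downarrow\A}\) has an initial object, and the same argument with arrows reversed shows that \(x\) is upper fundamental-groupoid negligible.

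The key step is the general lemma: if a category \(\CC\) has a terminal object \(t\), then \(\Pi_1(\CC)\) is equivalent to the terminal groupoid; the initial case is dual. Let \(\tau_c\co c\to t\) denote the unique morphisms. Since the localization does not change objects, \(\Pi_1(\CC)\) is nonempty and connected, because every object is joined to \(t\) by \(\tau_c\). It remains to show that every hom-set of \(\Pi_1(\CC)\) has at most one element, and for this it suffices to show that \(\pi_1(\CC,t)\) is trivial.

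Here the most convenient tool is the natural-transformation form of the universal property, Lemma~\ref{lemma:universal_localization}, or directly the presentation of \(\pi_1\). The morphisms \(\tau\) form a natural transformation \(\id_{\CC}\Rightarrow\constant_t\), since \(\tau_{c'}\circ h=\tau_c\) by uniqueness. After composing with \(\ell_{\CC}\), this gives a natural transformation between functors \(\CC\to\Pi_1(\CC)\) that invert all morphisms. By Lemma~\ref{lemma:universal_localization}, it extends to a natural isomorphism \(\id_{\Pi_1(\CC)}\Rightarrow\constant_t\) on \(\Pi_1(\CC)\).

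Naturality at a loop \(\gamma\co t\to t\) then gives
\[
\tau_t\circ\gamma=\tau_t.
\]
Since \(\tau_t=1_t\), this forces \(\gamma=1_t\). Hence \(\pi_1(\CC,t)\) is trivial, and \(\Pi_1(\CC)\) is equivalent to the terminal groupoid.

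The only delicate point I expect is this verification that the naturality extends to the formal inverses, so that the loop argument applies to every morphism of \(\Pi_1(\CC)\) and not only to those coming from \(\CC\). Lemma~\ref{lemma:universal_localization} handles exactly this. Alternatively, one can use the spanning-tree presentation with the tree \(\{\tau_c\mid c\neq t\}\): every generator \([h]\) with \(h\co c\to c'\) satisfies \([\tau_{c'}][h]=[\tau_c]\), and since \([\tau_c]=[\tau_{c'}]=1\), this gives \([h]=1\). The case of \(x\) with empty punctured comma category cannot arise, since a beat object has a terminal or initial object there.
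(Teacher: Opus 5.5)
Your proof is correct and follows the same route as the paper: you identify \(x\downarrow\inc_x\) and \(\inc_x\downarrow x\) with the punctured comma categories, observe that a beat object makes one of them have an initial or terminal object, and conclude that its fundamental groupoid is terminal. The only difference is that you actually prove this last fact, via Lemma~\ref{lemma:universal_localization} or the star-shaped spanning tree, whereas the paper takes it for granted.
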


\begin{proof}
If \(x\) is an up beat object, then
\(\widehat{x\downarrow\A}\cong x\downarrow\inc_x\) has an initial
object, so its fundamental groupoid is terminal. Thus \(x\) is upper
fundamental-groupoid negligible. The down beat case is dual.
\end{proof}

The converse need not hold. A fundamental-groupoid negligible object only requires the fundamental groupoid of the relevant punctured comma category to be terminal; the comma category need not have an initial or terminal object.
Consequently, its removal preserves the fundamental groupoid but need not
preserve the strong homotopy type.

\begin{corollary}[Classification after reducing the base]
\label{cor:classification-pi1-negligible}
Let \(\Ba'\) be obtained from a connected finite acyclic category \(\Ba\)
by successively removing fundamental-groupoid negligible objects. For every
category \(\F\), the inclusion \(\Ba'\to\Ba\) induces a bijection between
isomorphism classes of fiber bundles over \(\Ba\) with fiber \(\F\) and
isomorphism classes of fiber bundles over \(\Ba'\) with fiber \(\F\).
\end{corollary}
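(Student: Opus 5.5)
The plan is to reduce the corollary to a single removal step and then iterate. Write the reduction as a finite chain
\[
\Ba=\Ba_0\supset\Ba_1\supset\cdots\supset\Ba_n=\Ba',
\]
where each \(\Ba_{k+1}=(\Ba_k)_{x_k}\) is obtained by deleting a fundamental-groupoid negligible object \(x_k\) of \(\Ba_k\). Each \(\Ba_k\) is a full subcategory of a finite acyclic category, so it is again finite and acyclic, and Theorem~\ref{thm:pi1-negligible-removal} applies at every step. Composing the equivalences shows that the inclusion \(\inc\co\Ba'\to\Ba\) induces an equivalence \(\Pi_1(\inc)\co\Pi_1(\Ba')\to\Pi_1(\Ba)\).

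Next comes connectivity of \(\Ba'\). This matters because Theorem~\ref{thm:classification-H1} and the definition of fiber bundle assume a connected base. Since \(\Pi_1(\inc)\) is an equivalence, it is essentially surjective and full. The objects of \(\Pi_1\) are those of the category, and isomorphism classes of objects in \(\Pi_1\) are exactly the connected components. So \(\Ba'\) has the same number of components as \(\Ba\), namely one, provided \(\Ba'\) is nonempty. I expect nonemptiness to hold because an equivalence with the nonempty groupoid \(\Pi_1(\Ba)\) forces \(\Pi_1(\Ba')\) to be nonempty. Choosing \(b_0\in\Ba'\), the theorem then gives an isomorphism \(\pi_1(\Ba',b_0)\to\pi_1(\Ba,b_0)\).

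The classification step uses the natural bijections of Theorem~\ref{thm:classification-H1}. Isomorphism classes of bundles over \(\Ba\) with fiber \(\F\) correspond to \(\operatorname{Hom}(\pi_1(\Ba,b_0),G)/G\) with \(G=\Auto(\F)\), and similarly for \(\Ba'\). The induced map sends a bundle \(P\) to its restriction \(P\circ\inc\), whose monodromy is \(\rho_P\circ\pi_1(\inc)\). Precomposition with a group isomorphism is a \(G\)-equivariant bijection on homomorphism sets, hence a bijection on conjugacy classes.

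An alternative, more intrinsic check is also available. Bundles with constant fiber \(\F\) up to isomorphism are, by Proposition~\ref{prop:constant-fiber-reduction}, isomorphism classes of functors \(\Ba\to\B G\). By the universal property of localization together with Lemma~\ref{lemma:universal_localization}, these are the isomorphism classes in \([\Pi_1(\Ba),\B G]\). Precomposition with an equivalence of groupoids induces an equivalence of functor categories, and hence a bijection on isomorphism classes. This version avoids base points entirely.

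The main obstacle is bookkeeping rather than mathematics. One must verify that restriction along \(\inc\) really is the map realizing the bijection, which amounts to compatibility of the localization with precomposition, \(\overline{P\circ\inc}=\overline P\circ\Pi_1(\inc)\). This follows from the uniqueness in the universal property. One must also make sure the hypotheses of Theorem~\ref{thm:pi1-negligible-removal} persist along the chain, which holds because being finite and acyclic passes to full subcategories.
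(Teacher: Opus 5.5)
Your proposal is correct and follows the same route as the paper: iterate Theorem~\ref{thm:pi1-negligible-removal} to obtain an equivalence \(\Pi_1(\Ba')\simeq\Pi_1(\Ba)\), then use Theorem~\ref{thm:classification-H1} to transport monodromy representations along it, which amounts to restriction \(P\mapsto P\circ\inc\). Your explicit check that \(\Ba'\) remains nonempty and connected, obtained from \(\pi_0\) of that equivalence, fills in a point the paper's proof leaves implicit; it is needed for fiber bundles over \(\Ba'\) to be defined at all.
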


\begin{proof}
Successive applications of
Theorem~\ref{thm:pi1-negligible-removal} give an equivalence
\(\Pi_1(\Ba')\simeq\Pi_1(\Ba)\). Under the classification theorem, the
induced bijection is obtained by transporting monodromy representations
along this equivalence. Hence the two bases determine the same isomorphism
classes of fiber bundles with fiber \(\F\).
\end{proof}

Thus fundamental-groupoid negligible objects define reductions of the base
that are generally weaker than beat-object reductions but preserve all the
information required to classify fiber bundles with a fixed fiber. Beat
objects are adapted to strong homotopy reduction, whereas
fundamental-groupoid negligible objects are adapted specifically to
monodromy and classification.

\section{Detailed examples}\label{sec:examples}

\subsection{The Klein bottle, second model (fiber \(S'\))}
Take the category \(\mathcal{S}\) defined in Example~\ref{ex:S} and the functor \(K' : \mathcal{S} \to \mathbf{cat}\) defined by \(K'(x)=\mathcal{S}'\) where \(\mathcal{S}'\) is the poset from Example~\ref{ex:Sprime} where we will see it with the following diagram
\[
% https://tikzcd.yichuanshen.de/#N4Igdg9gJgpgziAXAbVABwnAlgFyxMJZABgBpiBdUkANwEMAbAVxiRAA8B9ARhAF9S6TLnyEU3clVqMWbLgCZ+gkBmx4CRMtyn1mrRCACePJULWiiE7dV2yDxxXykwoAc3hFQAMwBOEALZIZCA4EEjyNjL6IF48AHq81Ax0AEYwDAAKwupiID5YrgAWOKYxfoGIwaFIAMyRemyx8gkgSanpWeYaBlhg2LCtIGlgULXEAt7lSBIhYYgR0g0Gsdxxim1pmdkWPX1YA9TDo4gAtDXjyr4B09TViHWLdjGczesgyZudIt3vMF4lTj4QA
\begin{tikzcd}[column sep=4em, row sep=4em]
x_1 \arrow[d, "f_1^1"'] \arrow[rd, "f_2^1" description, bend left] & x_2 \arrow[ld, "f_1^2" description, bend right] \arrow[d, "f_2^2"] \\
y_1                                                                & y_2                                                               
\end{tikzcd}
\]
and \(K'(f)=1_{\mathcal{S}'}\) and \(K'(g)=s'\), where \(s'\) swaps the subscripts: \(s'(x_i)=x_j\) and \(s'(y_i)=y_j\) with \(i\neq j\).

For the total category $\int K'$ the set of generating morphisms consists of the vertical arrows coming from the poset \(S'\) and the horizontal lifts of \(f\) and \(g\) twisted by the swap \(\sigma\). The complete diagram is:
\[
\begin{tikzcd}[column sep=4em, row sep=4em]
{(x,x_1)} \arrow[d, "{(1_x,f_1^1)}"'] \arrow[rd, "{(1_x,f_2^1)}" description, bend left] \arrow[rrr, "{(f,1_{x_1})}" description, bend left=49] \arrow[rrrr, "{(g,1_{x_2})}", bend left=49] & {(x,x_2)} \arrow[ld, "{(1_x,f_1^2)}" description, bend right] \arrow[d, "{(1_x,f_2^2)}"] \arrow[rrr, "{(f,1_{x_2})}" description, bend left=49] \arrow[rr, "{(g,1_{x_1})}" description] &  & {(y,x_1)} \arrow[d, "{(1_y,f_1^1)}"'] \arrow[rd, "{(1_y,f_2^1)}" description, bend left] & {(y,x_2)} \arrow[ld, "{(1_y,f_1^2)}" description, bend right] \arrow[d, "{(1_y,f_2^2)}"] \\
{(x,y_1)} \arrow[rrr, "{(f,1_{y_1})}" description, bend right=49] \arrow[rrrr, "{(g,1_{y_2})}" description, bend right=49]                                                                  & {(x,y_2)} \arrow[rr, "{(g,1_{y_1})}" description] \arrow[rrr, "{(f,1_{y_2})}" description, bend right=49]                                                                               &  & {(y,y_1)}                                                                                & {(y,y_2)}                                                                               
\end{tikzcd}
\]
Here the vertical arrows are the unique morphisms of the poset \(S'\) (e.g., \((x,x_1)\to (x,y_1)\), etc.), and the horizontal arrows are the lifts of \(f\) and \(g\). 

The composition is determined by the Grothendieck construction, which gives the following relations:
\begin{itemize}
    \item \((f,1_{y_1})\circ (1_x,f_1^1)=(f,K'(f)(f_1^1))=(f,f_1^1)=(1_y,f_1^1)\circ (f,1_{x_1}).\)
    \item \((f,1_{y_1})\circ (1_x,f_1^2)=(f,K'(f)(f_1^2))=(f,f_1^2)=(1_y,f_1^2)\circ (f,1_{x_2}).\)
    \item \((g,1_{y_2})\circ (1_x,f_1^1)=(g,K'(g)(f_1^1))=(g,f_2^2)=(1_y,f_2^2)\circ (g,1_{x_2}).\)
    \item \((g,1_{y_2})\circ (1_x,f_1^2)=(g,K'(g)(f_1^2))=(g,f_2^1)=(1_y,f_2^1)\circ (g,1_{x_1}).\)
    \item \((f,1_{y_2})\circ (1_x,f_2^2)=(f,K'(f)(f_2^2))=(f,f_2^2)=(1_y,f_2^2)\circ (f,1_{x_2}).\)
    \item \((f,1_{y_2})\circ (1_x,f_2^1)=(f,K'(f)(f_2^1))=(f,f_2^1)=(1_y,f_2^1)\circ (f,1_{x_1}).\)
    \item \((g,1_{y_1})\circ (1_x,f_2^2)=(g,K'(g)(f_2^2))=(g,f_1^1)=(1_y,f_1^1)\circ (g,1_{x_1}).\)
    \item \((g,1_{y_1})\circ (1_x,f_2^1)=(g,K'(g)(f_2^1))=(g,f_1^2)=(1_y,f_1^2)\circ (g,1_{x_2}).\)
\end{itemize}

We now analyze sections. Fix the object \(x\) in \(\mathcal{S}\) and take \(f\) as the spanning tree. Hence \(K'\) is already in the correct form (constant fiber, trivial on the tree), and we can compute the monodromy as the homomorphism \(\pi_1(\mathcal{S},x) \cong \mathbb{Z} \to \operatorname{Aut}(S')\cong \mathbb{Z}_2\) given by \(1 \mapsto s'\). The strict gauge group is therefore \(\operatorname{Aut}(S')\cong \mathbb{Z}_2\) since the group is abelian.

This model has no strict fixed objects: \(s'\) swaps both \(x_1\leftrightarrow x_2\) and \(y_1\leftrightarrow y_2\), so no object is fixed. Hence, by the strict fixed point criterion, there are no horizontal sections. However, there are arbitrary (lax) sections. For example, choose \(s(x)=(x,x_1)\) and \(s(y)=(y,y_1)\). For \(f:x\to y\), we can choose $s(f)=(f,f_1^1)$ and for \(g:x\to y\), $s(g)=(g,f_1^2)$. Thus we obtain an arbitrary section. This shows that the absence of strict fixed points does not preclude the existence of arbitrary sections. It also illustrates the distinction between strict and lax fixed points.

\subsection{The projective plane covering}
Take the category $\mathcal{P}$ defined in Example \ref{ex:P}. Take the covering $E \colon \mathcal{P} \to \mathbf{cat}$ with fiber \(\{0,1\}\) and monodromy \(\sigma\) the permutation of two elements. The total category \(\mathcal{E}=\int E\) for the covering is:
\[
\begin{tikzcd}[column sep=4em, row sep=4em]
X_1 \arrow[d, "f_1^1"'] \arrow[rd, "f_2^1" description, bend left] & X_2 \arrow[ld, "f_1^2" description, bend right] \arrow[d, "f_2^2"] \\
Y_1 \arrow[d, "g_1^1"'] \arrow[rd, "g_2^1" description, bend left] & Y_2 \arrow[ld, "g_1^2" description, bend right] \arrow[d, "g_2^2"] \\
Z_1                                                                & Z_2                                                               
\end{tikzcd}
\]
with the relations that transform it into a poset. One can check that the total category models a sphere \cite[Proposition 3.1.4]{MR3024764} and that $\mathcal{P}$ models the projective plane \cite[Example 3.8]{LSTan} using the classifying space.

The monodromy \(\sigma\) swaps 0 and 1, so there is no strict fixed point. Since the fiber is discrete, the notions of strict and lax fixed points coincide (there are no non-identity morphisms). Hence there is no section at all, neither horizontal nor arbitrary.

\subsection{Relative core reduction of a fiber bundle}

Take $\mathcal{S}$ the double arrow category and define the fiber bundle $P \colon \mathcal{S} \to \mathbf{cat}$ with fiber the category $\mathcal{D}$ defined in Example \ref{ex:beat-points} and where $P(f)=1_\mathcal{D}$ and $P(g)$ is the functor defined by the swap of $f_1$ with $f_3$. Then the total category has the following diagram:
$$
% https://tikzcd.yichuanshen.de/#N4Igdg9gJgpgziAXAbVABwnAlgFyxMJZABgBpiBdUkANwEMAbAVxiRAAoAPUugShAC+pdJlz5CKMgEYqtRizZdSAI35CR2PASJkATLPrNWiDtwDGa4SAybxRXeQPzjHAJ49LGsdpQOZ1QwUTdndVQSsbbwlkB30A50V3C0FZGCgAc3giUAAzACcIAFskMhAcCCQAZmplGDAoKtLAl3Yc0ikAfT4QagY6WoYABVEtCRAsMGxYcNyC4sRS8qqauobEAFpKpoTg9Pau-l7+mCGRuxMJqdZ1EHyipClqJcQAFhX6xvijRTbOsKOBsNbD5xpMsNMbnd5o8yhVXu81pttt9dvt-iA+oCziDLuDrlYoUgHLCkABWBGfOQoji-DrJAEnIFRNi4iEEuZEp5w8kgWofDZbL5BDh7Tr0jHHU7Asas-Gze4LLkPIUtTrcAAWhwlWOlLLBbPl82qJPhVOF7E67k1PW1jOxMv1ctuHMQMOexL5iIAbMjzWrSDkOlItZi7bqLo6Zs6FW64R7VkgfSrFP7A5UQ5KmaM9Vco4TXUrEMTmimOtxA7oMzrmRHc5CXW8TTyS8FLQGOpWbaGpTXQXX2QrG88eZ6kOsk2bVR13IHg13M-ac3i8w3CyOE4gJy2OG202oKAIgA
\begin{tikzcd}[column sep=4em, row sep=4em]
{(x,a)} \arrow[rr, "{(f,1_a)}" description, bend left] \arrow[rr, "{(g,1_a)}" description, bend right] \arrow[d, "{(1_x,h)}" description]                                                                                                          &  & {(y,a)} \arrow[d, "{(1_y,h)}" description]                                                                                              \\
{(x,b)} \arrow[rr, "{(f,1_b)}" description, bend left] \arrow[rr, "{(g,1_b)}" description, bend right] \arrow[d, "{(1_x,f_1)}" description, bend right=60] \arrow[d, "{(1_x,f_3)}" description, bend left=60] \arrow[d, "{(1_x,f_2)}" description] &  & {(y,b)} \arrow[d, "{(1_y,f_2)}" description] \arrow[d, "{(1_y,f_1)}" description, bend right=60] \arrow[d, "{(1_y,f_3)}", bend left=60] \\
{(x,c)} \arrow[rr, "{(f,1_c)}" description, bend left] \arrow[rr, "{(g,1_c)}" description, bend right]                                                                                                                                             &  & {(y,c)}                                                                                                                                
\end{tikzcd}
$$
where the composition is twisted by the action of $P(g)$ in the way indicated by the Grothendieck construction.

In this case we can check that $(x,b)$ is a beat object of the projection $\pi_P$ since it is a beat object and the connecting morphism is in the fiber. When we delete $(x,b)$ we can see that now $(y,b)$ is a beat object and finally we obtain a fiber bundle that is equivalent to the product $\mathcal{S} \times \mathcal{S}$. 

\subsection{A negligible object which is not a beat object}
\label{ex:negligible-not-beat}

Let \(\A\) be the category generated by
\[
% https://tikzcd.yichuanshen.de/#N4Igdg9gJgpgziAXAbVABwnAlgFyxMJZAJgBoAGAXVJADcBDAGwFcYkR6QBfU9TXfIRTlSARmp0mrds268QGbHgJFRYiQxZtEIAB5y+SwUQDM6mpuk6AngYX9lQ5ABZzkre1p3FAlSjLEGlLaIABG3g7GKACsbpYhAO7cEjBQAObwRKAAZgBOEAC2SCIgOBBIZO5WHAD6+jSM9KEwjAAKkX4guVhpABY4dnmFxTRlSGZVIfQ1tg1NLe1GnYww2QM8OflFiGql5Ygl8TI1nHPNbR1CICtrIDTNYFDj5BsgQ9u7Y4ixk8fhZwtLuxun0BvcYI8kABaEwveTvCqjfYTI46bJ3a7zC5LK43MFhCFPRCw14IxCVL4o4LsNIYxrnRa+K4g-oYh5EmFwzbDRCuPZIH6okC9OlYxmOdh4tmE56krYCpGI346UJ1UUMoE6KVynk-SkWakqmbqwE44E9Vk67Z8r4ANnBkOJh0NICKVqQNv29oJjs5Bo8OkIAOxTPNoOSXCAA
\begin{tikzcd}
                                                                &                                                           & a \arrow[ld, "a_x"'] \arrow[rd, "a_y"] &   &                                                                                     &   \\
u \arrow[rru, "u_a", bend left] \arrow[rrd, "u_b"', bend right] & x \arrow[rr, "f", bend left] \arrow[rr, "g"', bend right] &                                        & y & v \arrow[lld, "h", bend left] \arrow[r, "m", bend left] \arrow[r, "n"', bend right] & w \\
                                                                &                                                           & b \arrow[lu, "b_x"] \arrow[ru, "b_y"'] &   &                                                                                     &  
\end{tikzcd}
\]
subject to the relations
\[
a_xu_a=b_xu_b,
\qquad
a_yu_a=b_yu_b,
\]
together with
\[
fa_x=ga_x=a_y,
\qquad
fb_x=gb_x=b_y.
\]
No relation is imposed between \(m\) and \(n\) or \(f\) and \(g\). These relations also imply that all paths from \(u\) to \(y\) coincide. 

Set \(\A_u=\A\setminus\{u\}\), and let
\(\inc_u\co\A_u\to\A\) be the inclusion. We claim that \(u\) is upper
fundamental-groupoid negligible but is not a beat object.

Let \(u_x\co u\to x\) and \(u_y\co u\to y\) denote the common composites
\[
u_x=a_xu_a=b_xu_b,
\qquad
u_y=a_yu_a=b_yu_b=fu_x=gu_x.
\]
The comma category \(u\downarrow\inc_u\) has the four objects
\(
u_a,\text{} u_b,\text{} u_x,\text{}u_y.
\)
The nonidentity generating
morphisms of \(u\downarrow\inc_u\) are represented by
\[
\begin{tikzcd}
& u_a
  \arrow[ld, "a_x"']
  \arrow[rd, "a_y"]
& \\
u_x
  \arrow[rr, "f", bend left]
  \arrow[rr, "g"', bend right]
&&
u_y
\\
& u_b
  \arrow[lu, "b_x"]
  \arrow[ru, "b_y"']
&
\end{tikzcd}
\]
with relations
\[
fa_x=ga_x=a_y,
\qquad
fb_x=gb_x=b_y.
\]

We now compute its fundamental group. Choose \(a_x\), \(b_x\), and \(f\)
as a spanning tree. The remaining generators are \(g\), \(a_y\), and
\(b_y\). Since all tree morphisms represent the identity, the relations give
\[
[g]=[a_y]=[b_y]=1.
\]
Hence
\(\Pi_1(u\downarrow\inc_u)\) is equivalent to the terminal groupoid.
Therefore \(u\) is upper fundamental-groupoid negligible. By
Theorem~\ref{thm:pi1-negligible-removal}, the inclusion
\[
\inc_u\co\A_u\longrightarrow\A
\]
induces an equivalence of fundamental groupoids.

Nevertheless, \(u\) is not an up beat object. Indeed,
\(u\downarrow\inc_u\) has no initial object. The objects \(u_a\) and
\(u_b\) are distinct minimal objects, and there is no morphism between them.
Neither \(u_x\) nor \(u_y\) can be initial, since there are no morphisms from
either object to \(u_a\) or \(u_b\). Moreover,
\(\inc_u\downarrow u\) is empty, because no nonidentity morphism has
codomain \(u\). Thus \(u\) is not a down beat object either.

Consequently, \(u\) is fundamental-groupoid negligible but not a beat
object. We now describe the effect of removing it more explicitly.

In the category \(\A_u=\A\setminus\{u\}\), the object \(b\) becomes a
down beat object. Indeed, the only nonidentity morphism with codomain \(b\)
is \(h\co v\to b\). Hence \(h\) is the terminal object of
\(\widehat{\A_u\downarrow b}\). Let
\(\A_{u,b}=\A\setminus\{u,b\}\). Removing \(b\) therefore defines a
strong deformation retraction
\(\A_u\to\A_{u,b}\).

\(\A_{u,b}\) is generated by
\[
% https://tikzcd.yichuanshen.de/#N4Igdg9gJgpgziAXAbVABwnAlgFyxMJZAJgBoAGAXVJADcBDAGwFcYkR6QBfU9TXfIRTlSARmp0mrds268QGbHgJFRYiQxZtEIAB5y+SwUQDM6mpuk6AngYX9lQkqWIap2uncUCVKM64t3dgB3bgkYKABzeCJQADMAJwgAWyQREBwIJDJJLXZ6AH19GkZ6ACMYRgAFB2MdBKxIgAscO0SUtJpMpDNcqw4C2xLyypqjXxBGGDjWnnik1MQc7sReyw84kGGK6tqJqZmtkAqwKB7yOZB2xeWs1cC8nUij0p2xnyEQBubWmhOzxAAWhMF3k1yQABYunccut2LQii8Rrtxp8DrMwQtIdCettRntPlgwNhYEc4ToEbZLuDEFCMncAKx-GCnc4PfqpPEoj7sIkktjUrG0nGIJnHFkA4HpcngJFvAnsb4tMJcIA
\begin{tikzcd}
  &                                                           & a \arrow[ld, "a_x"'] \arrow[rd, "a_y"]                                                                              &   \\ & x \arrow[rr, "f", bend left] \arrow[rr, "g"', bend right] &                                                                                                                     & y \\
  &                                                           & v \arrow[lu, "v_x"] \arrow[ru, "v_y" description] \arrow[r, "m" description, bend left] \arrow[r, "n"', bend right] & w
\end{tikzcd}
\]
subject to
\[
f\circ a_x=g\circ a_x=a_y,
\qquad
f\circ v_x=g\circ v_x=v_y,
\]
with no relation between \(m\) and \(n\). Since this is the wedge of the previous simply connected category with the category \(\mathcal{S}\) we have that the fundamental group is just \(\mathbb{Z}\).

\bibliographystyle{plain}
\bibliography{biblio}

\end{document}